\begin{document}

\newtheorem{thm}{Theorem}[section]
\newtheorem{lem}[thm]{Lemma}
\newtheorem{prop}[thm]{Proposition}
\newtheorem{cor}[thm]{Corollary}
\newtheorem{exmp}[thm]{Example}
\newtheorem{rem}[thm]{Remark}
\newtheorem{ques}[thm]{Question}
\newtheorem{defn}[thm]{Definition}

\input amssym.def

\newcommand{\ilim}{\mathop{\varinjlim}\limits}

\def\Jac{\mbox{\rm Jac$\:$}}
\def\Nil{\mbox{\rm Nil$\:$}}
\def\wt{\widetilde}
\def\Q{\mathcal{Q}}
\def\O{\mathcal{O}}
\def\ff{\frak}
\def\Spec{\mbox{\rm Spec}}
\def\ZS{\mbox{\rm ZS}}
\def\wB{\mbox{\rm wB}}
\def\type{\mbox{ type}}
\def\Hom{\mbox{ Hom}}
\def\rank{\mbox{ rank}}
\def\Ext{\mbox{ Ext}}
\def\Ker{\mbox{ Ker }}
\def\Max{\mbox{\rm Max}}
\def\End{\mbox{\rm End}}
\def\ord{\mbox{\rm ord}}
\def\l{\langle\:}
\def\r{\:\rangle}
\def\Rad{\mbox{\rm Rad}}
\def\Zar{\mbox{\rm Zar}}
\def\Supp{\mbox{\rm Supp}}
\def\Rep{\mbox{\rm Rep}}
\def\cal{\mathcal}
\def\p{{\rm{p}}}

\def\gen{\mbox{\rm gen$\:$}}
\def\Jac{\mbox{\rm Jac$\:$}}
\def\Nil{\mbox{\rm Nil$\:$}}
\def\ord{\mbox{\rm ord}}
\def\wt{\widetilde}
\def\Q{\mathcal{Q}}
\def\ff{\frak}
\def\Spec{\mbox{\rm Spec}}
\def\ZS{\mbox{\rm ZS}}
\def\wB{\mbox{\rm wB}}
\def\type{\mbox{ type}}
\def\Hom{{\rm Hom}}
\def\depth{{\rm depth}}
\def\Gen{{\rm Gen}}
\def\rank{\mbox{\rm rank}}
\def\Ext{{\rm Ext}}
\def\embdim{{\mbox{\rm emb.dim } }}
\def\length{{\mbox{\rm length }}}
\def\Ker{\mbox{\rm Ker }}
\def\pd{{\rm pd}}
\def\gr{{\rm gr}}
\def\Max{\mbox{\rm Max}}
\def\End{\mbox{\rm End}}
\def\l{\langle\:}
\def\r{\:\rangle}
\def\Rad{\mbox{\rm Rad}}
\def\Zar{\mbox{\rm Zar}}
\def\Supp{\mbox{\rm Supp}}
\def\Rep{\mbox{\rm Rep}}
\def\cal{\mathcal}
\def\p{{\rm{p}}}
 


\title{Generic formal fibers and analytically ramified stable rings}
\thanks{2010 {\it Mathematics Subject Classification.}  Primary 13E05, 13B35, 13B22; Secondary 13F40}

\author{Bruce Olberding}


\address{Department of Mathematical Sciences, New Mexico State University,
Las Cruces, NM 88003-8001}

\maketitle


\begin{abstract} Let $A$ be a local Noetherian domain of Krull dimension $d$.  Heinzer, Rotthaus and Sally have shown that if the generic formal fiber of $A$ has dimension $d-1$, then $A$ is 
birationally dominated by a one-dimensional analytically ramified local Noetherian ring having residue field finite over the residue field of $A$.  We explore further this correspondence between prime ideals in the generic formal fiber and one-dimensional analytically ramified local rings.  Our main focus is on the case where the analytically ramified local rings are stable, and we show that in this case the embedding dimension of the stable ring reflects the embedding dimension of a prime ideal maximal in the generic formal fiber,  thus providing a measure of how far the generic formal fiber deviates from regularity.  A number of  characterizations of  analytically ramified local stable domains are also given.        
\end{abstract}


\section{Introduction}

An important technical  fact regarding a finitely generated algebra $A$ over a field or the ring of integers, is that if $A$ is reduced, then $A$ has {\it finite normalization}, meaning that the integral closure of $A$ in its total ring of quotients is a finitely generated $A$-module.  Geometrically, this guarantees that the normalization of  a variety or arithmetic scheme is given by a finite morphism.  However, outside the geometric and arithmetic contexts, a local Noetherian domain, even of dimension 1, need not have finite normalization.  Well-known examples due to Akizuki, Schmidt and Nagata attest to this; see \cite{Akizuki}, \cite{Schmidt} and \cite[Example 3, p.~205]{Na}.  These examples are constructed between a carefully chosen rank one discrete valuation ring and its completion, and as such, the examples suggest a certain esoteric quality regarding the failure of a one-dimensional local Noetherian domain to have finite normalization.  However, an entirely different construction due to Heinzer, Rotthaus and Sally suggests a certain inevitability to such examples, and shows that local Noetherian domains without finite normalization can be found birationally dominating any $d$-dimensional local Noetherian domain, $d>1$, having generic formal fiber of dimension $d-1$ (which is the case if $A$ is essentially of finite type over a field) \cite[Corollary 1.27]{HRS}.

More precisely, let  $A$ be a local
Noetherian domain with maximal ideal ${\ff m}$ and  quotient field $F$, and let $\widehat{A}$ denote the completion of $A$ in the ${\ff m}$-adic topology.    The {\it generic
formal fiber} of $A$ is the localization of $\widehat{A}$ at the multiplicatively closed subset consisting of the nonzero elements of $A$.  Thus the generic formal fiber of $A$ is the   
  Noetherian ring $\widehat{A}[F]$, and the prime ideals of $\widehat{A}[F]$ are those prime ideals  extended from prime ideals $P$ of $\widehat{A}$ such that $P \cap A = 0$.    
   When  $A$ has Krull dimension $d>0$, then the generic formal fiber
of $A$ has Krull dimension  less than $d$.  Heinzer, Rotthaus and Sally have shown that if the generic formal fiber is as large as possible in the sense that its Krull dimension is $d-1$, then there exists an analytically ramified local Noetherian domain $R$ birationally dominating $A$ and having residue field finite over the residue field of $A$ \cite[Corollary 1.27]{HRS}.  A theorem of Krull asserts that the property of being {\it analytically ramified}, meaning that the completion contains nonzero nilpotent elements, is equivalent for  one-dimensional local Noetherian domains to the failure to have finite normalization \cite{Krull}.  
The description  of the rings in \cite{HRS} is quite straightforward: Given a prime ideal $P$ of $\widehat{A}$ of dimension $1$ such that $P \cap A =0$, the ring $F \cap (\widehat{A}/I)$ is a one-dimensional analytically ramified local Noetherian ring for appropriate choices of $P$-primary ideal $I$ (this is restated more formally in Lemma~\ref{HRS lemma} below).      
Moreover, Matsumura has shown such a prime ideal $P$ exists in $\widehat{A}$ whenever $A$ 
 is essentially of finite type over a field, so the construction is relevant in many natural  
circumstances \cite[Theorem 2]{Mat2}.

In this article we explore further the connection between the generic formal fiber of $A$ and the nature of the analytically ramified one-dimensional local rings which birationally dominate it. Specifically we show the embedding dimension of birationally dominating analytically ramified ``stable'' local rings reflects the regularity, or lack thereof, of the generic formal fiber.  To phrase this more precisely, we introduce some terminology.   
Recall that if  $A \subseteq R$ is an extension of
quasilocal domains, then $R$
{\it dominates} $A$ if the maximal ideal of $A$ is a subset of the maximal ideal of $R$, and if $A$ and $R$ share the same quotient field, then 
 $R$
 {\it birationally dominates} $A$.  When $R$ birationally dominates $A$ and $R/{\ff m}R$ is a finite $A$-module, then we say that $R$ {\it finitely dominates} $A$; if also $R = A + {\ff m}R$, then $R$ {\it tightly dominates} $A$.  Thus in our terminology, the theorem of Heinzer, Rotthaus and Sally states that  when  the generic formal fiber of the $d$-dimensional local Noetherian domain $A$ has dimension $d-1$, then $A$ is finitely dominated by  a one-dimensional analytically ramified local Noetherian domain. 
 
 Our focus is on a specific class of one-dimensional analytically ramified local Noetherian domains, those that are ``stable'' in the sense of Lipman \cite{Lipman} and Sally and Vasconcelos \cite{SV}. These rings, which we define in Section 2, are characterized in the local Noetherian case by the property that each ideal has a principal reduction of  reduction number at most $1$; that is, for each ideal $I$ of $R$, there exists $i \in I$ such that $I^2 = iI$.   As we discuss in Section 2, analytically ramified local Noetherian stable domains of embedding dimension $>2$ were previously known to exist only  in characteristic $2$ and in a special setting.  One of the main results of the present article is that when $A$ is an excellent local Noetherian domain of dimension $d>1$, then the dimension of the generic formal fiber of $A$ is $d-1$ if and only if $A$ is finitely dominated by an analytically ramified stable domain $R$ of embedding dimension $d$ (Theorem~\ref{generic}).  In Theorem~\ref{pre excellent} we show the ring $R$ arises via the construction of Heinzer, Rotthaus and Sally in a natural way from a prime ideal $P$ of $\widehat{A}$ such that $\widehat{A}/P$ has dimension $1$ and $P \cap A = 0$, and the embedding dimension of the ring $R$ is $1$ more than the embedding dimension of the ring $\widehat{A}_P$.  Thus when $A$ is excellent, or more generally, a $G$-ring, then this local ring $\widehat{A}_P$ is a regular local ring and hence the embedding dimension of $\widehat{A}_P$ is $d-1$. We obtain from this observation a bound on the embedding dimensions of the analytically ramified local Noetherian stable rings that finitely dominate $A$ (Corollary~\ref{excellent}).

The main results regarding the connection between stable rings and the generic formal fiber are in Sections 5 and 6, but since our main focus is on one-dimensional stable rings, and since stable rings are also of interest in non-Noetherian commutative ring theory (for some recent examples, see \cite{ElG, GP, KM, Mim, Sega, Zan2, Zan}),
 we include in Sections 3 and 4 characterizations of these rings in terms of their normalization and completion.

{\it Notation and terminology}.  All rings are commutative with identity. We use the following standard notions throughout the article.   
For a ring $R$, we denote by $\overline{R}$ the integral closure of $R$ in its total ring quotients.  Thus $\overline{R}$ is the {\it normalization} of $R$.  The ring $R$ has {\it finite normalization}  if $\overline{R}$ is a finite $R$-module.  When $R$ is quasilocal with maximal ideal $M$, we denote by $\widehat{R}$ the completion of $R$ in the $M$-adic topology.  The {\it embedding dimension} of the quasilocal ring $R$, denoted $\embdim R$, is the cardinality of a minimal generating set of $M$.

Let $A \subseteq S$ is an extension of rings, and let $L$ be an $S$-module.  An {\it $A$-linear  derivation} $D:S \rightarrow L$ is an $A$-linear mapping with $D(st) = sD(t) + tD(s)$ for all $s,t \in S$.  There exists an $S$-module $\Omega_{S/A}$, as well an   $A$-linear derivation {$d_{S/A}:S \rightarrow L$}, 
 such that  for each $A$-linear derivation $D:S \rightarrow L$, there   is a unique $S$-module homomorphism $\alpha: \Omega_{S/A} \rightarrow L$ with $D = \alpha \circ d_{S/A}$ \cite[pp.~191-192]{Ma}.  The module $\Omega_{S/A}$ is the module of {\it K\"ahler differentials} of the extension $A \subseteq S$ and $d_{S/A}$ is the {\it exterior differential} of this extension.

\section{Preliminaries on stable rings}

\label{(S)}

The terminology of stable ideals  originates with Lipman \cite{Lipman} and Sally and Vasconcelos \cite{SV}.  We leave the motivation for the terminology unexplained here, and refer instead to \cite{OlSurvey} for background on this class of rings.  
 An ideal $I$ of a ring $R$ is {\it stable} if it is
projective over its ring of endomorphisms.  A domain $R$ is  {\it stable} provided every nonzero ideal  is
stable.
An ideal $I$ of a quasilocal domain $R$ is stable if and only if 
 $I^2 = iI$ for some $i \in I$; if and only if $I$ is a principal ideal of $\End(I):= (I:_FI),$ where $F$ is the quotient field of $R$ (cf.~\cite{Lipman} and
 \cite[Lemma 3.1]{OlStructure}). 
%
It follows from the  Principal Ideal Theorem that a Noetherian stable domain has Krull dimension at most $1$.  
If  $R$ is a $2$-generator domain (meaning every ideal can be generated by $2$ elements),
then $R$ is a stable domain \cite{Bass}.  Conversely, when $R$ is a Noetherian stable domain with finite normalization, then $R$ is a $2$-generator domain \cite{DK, SVBull}.  
%
%
%
%
%
%
%
In \cite[Example 5.4]{SV}, 
Sally and Vasconcelos  showed that there do exist local Noetherian stable domains without
the 2-generator property (and hence without finite normalization). Their example was constructed using derivations and a 
method of Ferrand and Raynaud, and 
relied on a specific field of characteristic $2$. In 
\cite[(3.12)]{HLS}, Heinzer, Lantz  and Shah also observed that such
examples (again, using this specific field of characteristic $2$)
could be found for any choice of multiplicity. Note that for a local Noetherian stable ring $R$ with maximal ideal $M$, since $M^2 = mM$ for some $m \in M$,  the multiplicity and embedding dimension of $R$ agree.  In this article we see many  
 more such examples of analytically ramified one-dimensional local stable domains, and in all possible characteristics.  Our focus in this article is on the Noetherian case, but we develop characterizations in Sections 3 and 4 for the general one-dimensional case as well, since it requires little extra effort.    
   In \cite{OlbAR}, examples of non-Noetherian one-dimensional stable domains are given.   
 
To simplify terminology, we say  that a domain $R$  is a {\it bad
stable domain} if it is a quasilocal stable domain with  Krull dimension $1$ that does not have  finite normalization.  
The domain $R$ 
 is a {\it bad
$2$-generator ring} if $R$ is a $2$-generator local  ring that does not have  finite normalization.  
These rings are ``bad'' in the sense of Nagata's appendix in \cite{Na}, ``Examples of bad Noetherian rings,''  because they do not have finite normalization.  Thus a
 bad Noetherian stable domain is a  (necessarily one-dimensional) analytically ramified local Noetherian stable domain.  A bad $2$-generator domain is an analytically ramified $2$-generator domain.

In later sections we often use the following characterization of bad stable domains, which is a consequence of \cite[Corollary 4.3]{OlStructure},
 \cite[Corollary 2.5]{OlClass} and
\cite[Lemma 3.7]{OlRend}. 
We say that an extension $R \subseteq S$ of rings is {\it quadratic} if every $R$-submodule of $S$ containing $R$ is a ring; equivalently, $st \in sR + tR + R$ for all $s,t \in S$.

\begin{prop} \label{ar stable char} A  quasilocal domain $R$, not a DVR, is 
 a bad stable domain if and only if $\overline{R}$ is a DVR such that $\overline{R}/R$ is a divisible $R$-module and  $R \subseteq \overline{R}$ is a quadratic extension.      
\end{prop}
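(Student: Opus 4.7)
The plan is to derive this biconditional by synthesizing the three cited prior results of the author, rather than argue the equivalence from first principles. I would split the proof into its two implications and invoke one structural result at each stage, relying on the observation that all the relevant data (stability, the nature of $\overline{R}$, divisibility of $\overline{R}/R$, and quadraticity of $R \subseteq \overline{R}$) can be read off from standard characterizations of stable ideals via their endomorphism rings.

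For the forward direction, suppose $R$ is a bad stable domain. By \cite[Corollary 4.3]{OlStructure}, which classifies the normalization of a stable domain together with the extension $R \subseteq \overline{R}$, one obtains that $\overline{R}$ is a Pr\"ufer domain and that the extension $R \subseteq \overline{R}$ is quadratic. Since $R$ is quasilocal of Krull dimension $1$ and $\overline{R}$ is integral over $R$, the normalization $\overline{R}$ is itself quasilocal of dimension $1$, hence a DVR. Divisibility of $\overline{R}/R$ as an $R$-module is then supplied by \cite[Corollary 2.5]{OlClass}, which translates the failure of finite normalization (in the one-dimensional stable setting) into the vanishing of the conductor $(R :_{\overline{R}} \overline{R})$, and in turn into divisibility of $\overline{R}/R$. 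The hypothesis that $R$ is not a DVR simply ensures that $R \neq \overline{R}$, so this divisibility statement is nonvacuous.

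For the reverse direction, assume $R$ is quasilocal (and not a DVR), that $\overline{R}$ is a DVR, that $\overline{R}/R$ is a divisible $R$-module, and that $R \subseteq \overline{R}$ is a quadratic extension. I would apply \cite[Lemma 3.7]{OlRend} to deduce stability of every nonzero ideal of $R$: for such an ideal $I$, the endomorphism ring $\End(I) = (I:_{F} I)$ contains $R$ and is contained in $\overline{R}$, so by quadraticity it is a ring sitting between $R$ and $\overline{R}$; the combination of quadraticity and the DVR hypothesis on $\overline{R}$ then forces $I$ to be principal over $\End(I)$, which is the stability criterion recalled in Section~\ref{(S)}. Since $R$ is not a DVR we have $R \neq \overline{R}$, so $\overline{R}/R$ is a \emph{nonzero} divisible module over the quasilocal ring $R$; Nakayama's lemma (applied to the maximal ideal of $R$) immediately rules out finite generation, so $R$ fails to have finite normalization. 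Finally, $\dim R = \dim \overline{R} = 1$ by integrality, so $R$ is a bad stable domain.

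The main obstacle is the bookkeeping of verifying that the hypotheses of each cited result match the context of invocation. In the forward direction, the delicate point is that \cite[Corollary 2.5]{OlClass} must be shown to apply in precisely the form "one-dimensional stable and not finitely normalized implies $\overline{R}/R$ divisible," and in the reverse direction one must confirm that \cite[Lemma 3.7]{OlRend} extracts full stability from the triple of conditions (DVR normalization, divisible cokernel, quadratic extension) rather than a weaker variant. Because these cited statements are themselves already distilled characterizations tailored to this setting, the proof ultimately reduces to citing and combining them, with the Nakayama step providing the only self-contained computation.
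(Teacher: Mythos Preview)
Your approach is exactly the paper's: the paper gives no self-contained argument for this proposition but simply records it as a consequence of \cite[Corollary~4.3]{OlStructure}, \cite[Corollary~2.5]{OlClass}, and \cite[Lemma~3.7]{OlRend}, and your proposal is precisely to assemble those three citations into the two implications (with the Nakayama step supplying the only extra computation). One small slip in your elaboration: quasilocality of $\overline{R}$ does not follow merely from integrality over the quasilocal ring $R$---that conclusion must come from the cited structure result itself rather than from general commutative algebra---but this does not affect the strategy.
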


It is also useful to note that when $R$ is a quasilocal domain and $\overline{R}$ is a DVR, then $\overline{R}/R$ is a divisible $R$-module if and only if $\overline{R}$ tightly dominates $R$.  
Thus a quasilocal domain $R$, not a DVR, is 
 a bad stable domain if and only if $R \subseteq \overline{R}$ is a quadratic extension and $\overline{R}$ is a DVR that tightly dominates $R$.

\section{The completion of a bad stable ring}

Let $R$ be a ring, and let $C$ be a multiplicatively closed set of nonzerodivisors of $R$.  An $R$-module $L$ is {\it $C$-divisible} if for each $c \in C$ and $\ell \in L$, there exists $\ell' \in L$ such that $\ell = c\ell'$. The module $L$ is {\it $C$-torsion} provided that for all $\ell \in L$, there exists $c \in C$ such that $c\ell =0$.  When $R \subseteq S$ is an extension of rings and $L$ is an $R$-module, we say that $L$ {\it admits an $S$-module structure} if there exists an $S$-module structure on $L$ extending its $R$-module structure.

\begin{lem} \label{admits} \label{torsion admits} Let $R \subseteq S$ be an extension of rings, and let $C$ be a multplicatively closed subset of $R$ consisting of nonzerodivisors of $S$.    Suppose that $S/R$
is {{\it C}}-divisible, and let $T$ be a ${{{{\it C}}}}$-torsion
$R$-module.

\begin{itemize}

\item[(1)]  The $R$-module $T$ admits an  $S$-module
structure if and only if for every $t \in T$, it is the case that
$(0:_{R} t)S \cap R = (0:_{R} t)$.

\item[(2)]  If $T$ admits an $S$-module structure, then
this   structure is unique and is given for each $s \in S$ and $t
\in T$  by $s \cdot t = rt$, where  $r$ is any member of $R$ such
that $s-r \in (0:_R t)S$ (and such a member $r$ of $R$ must exist).


\end{itemize}

\end{lem}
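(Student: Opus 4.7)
The plan is to exploit the interplay between $C$-divisibility of $S/R$ and $C$-torsion of $T$: every $t \in T$ is annihilated by some $c \in C$, and every element of $S$ is congruent modulo $R$ to a multiple of $c$. Together these guarantee that for any $s \in S$ and $t \in T$ there exists $r \in R$ with $s - r \in (0:_R t)S$ -- explicitly, choose $c \in C \cap (0:_R t)$ and $s' \in S$ with $s - cs' \in R$, and take $r = s - cs'$. This establishes the parenthetical existence claim in (2).

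For the necessity in (1) and the uniqueness in (2) I would run a single computation. Suppose $T$ carries an $S$-module structure extending its $R$-structure, and set $I = (0:_R t)$. For any $x \in IS \cap R$, write $x = \sum_k i_k s_k$ with $i_k \in I$ and $s_k \in S$; then $xt = \sum_k s_k(i_k t) = 0$, using the extended $S$-action, and since this action restricts to the given $R$-action, $x \in I$. Applied to arbitrary $x \in IS \cap R$, this is the necessity in (1); applied to $x = r - r'$ for two candidates with $s - r, s - r' \in IS$, it shows $rt = r't$, forcing the formula in (2) and uniqueness of the structure.

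For sufficiency in (1), assume the annihilator condition and define $s \cdot t = rt$ with $r$ chosen as in the first paragraph. Well-definedness is the preceding computation run in reverse, and biadditivity together with $1 \cdot t = t$ (take $r = 1$) are immediate. The serious check is associativity: pick $r' \in R$ with $s' - r' \in (0:_R t)S$, so $s' \cdot t = r't$; noting $(0:_R t) \subseteq (0:_R r't)$, choose $r \in R$ with $s - r \in (0:_R t)S \subseteq (0:_R r't)S$, giving $s \cdot (s' \cdot t) = rr't$. The identity
\[
ss' - rr' \;=\; s(s'-r') + (s-r)r',
\]
combined with the inclusion $r' \cdot (0:_R r't) \subseteq (0:_R t)$, shows $ss' - rr' \in (0:_R t)S$, so $(ss') \cdot t = rr't$ as well.

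The main obstacle is reconciling the three independent choices of $R$-representatives used to compute $s' \cdot t$, $s \cdot (s' \cdot t)$, and $(ss') \cdot t$: a priori each is pinned down only up to the extension of a different annihilator ideal to $S$. The factorization of $ss' - rr'$ displayed above, together with the containment $(0:_R t) \subseteq (0:_R r't)$, is what draws all three choices into a single annihilator ideal so that the well-definedness principle can be applied.
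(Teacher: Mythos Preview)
Your proof is correct in spirit and follows the same strategy as the paper: define $s\cdot t = rt$ for any $r\in R$ with $s-r\in(0:_Rt)S$, and verify the module axioms. You actually supply more than the paper does: the paper omits the module-axiom verifications entirely (``we omit the calculations''), whereas you carry out the associativity check via the identity $ss'-rr'=s(s'-r')+(s-r)r'$. Your appeal to $r'\cdot(0:_R r't)\subseteq(0:_R t)$ there is valid but unnecessary, since $s-r$ already lies in $(0:_Rt)S$ and $(0:_Rt)$ is an $R$-ideal, so $(s-r)r'\in(0:_Rt)S$ directly.

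There is, however, one small gap in your second paragraph. Showing $rt=r't$ whenever $s-r,\,s-r'\in IS$ only establishes that the formula in (2) is \emph{well-defined}; it does not yet show that an arbitrary extending $S$-action $\ast$ must obey it, which is what uniqueness means. The fix is immediate with what you already have: your computation $x\ast t=\sum_k s_k\ast(i_k t)=0$ in fact works for every $x\in IS$, not just for $x\in IS\cap R$; applying it to $x=s-r$ gives $(s-r)\ast t=0$, hence $s\ast t=rt$. The paper makes this step explicit by writing $s=r+c\sigma$ with $c\in C\cap(0:_Rt)$ and computing $s\ast t = rt+\sigma\ast(ct)=rt$.
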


\begin{proof}
(1)  If $T$ admits an $S$-module structure $\ast$, then, using the fact that $\ast$ extends the $R$-module
structure on $T$, it is easy to check that for all $t \in T$,  it is the case that $(0:_{R} t)S \cap R
= (0:_{R} t)$.
%
Conversely, suppose  that for all $t \in T$,  it is the case that $(0:_{R} t)S \cap R
= (0:_{R} t)$. Let $s \in S$ and $t \in T$.  Then since $T$
is ${{{{\it C}}}}$-torsion, there exists $c \in {{{{\it C}}}}$ such
that $ct = 0$. Thus since $S = R + cS$, we have  that $S = R +
(0:_{R} t)S$, and there exists $r \in R$ such that $s -r \in (0:_{R}
t)S$. We define $s \cdot t = rt$.  We omit the calculations, but it is straightforward to check that this defines an $S$-module structure on $T$.  

(2)
Suppose that $\ast$ denotes an $S$-module structure on $T$ that
extends the $R$-module structure on $T$.  We show that the
operations $\ast$ and $\cdot$, where $\cdot$ is defined as in (2),
induce the same $S$-module structure on $T$.  Let $s \in S$ and $t
\in T$. Since $T$ is ${{{{\it C}}}}$-torsion, there exists $c \in
{{{{\it C}}}}$ such that $ct = 0$.  So since by assumption, $S = R +
cS$, we may choose $r \in R$,
 and $\sigma \in S$ such that
$s - r = c \sigma$. Then, using the fact that $\ast$
extends the $R$-module structure on $T$, we have
$s \ast t = (r +c \sigma) \ast t = 1 \ast rt + \sigma
\ast (ct)  = rt,$ which proves that $s
\ast t = s \cdot t$.
\end{proof}

Recall from Section 2 that a ring extension $R \subseteq S$ is quadratic  if every $R$-submodule of $S$ containing $R$ is a ring.   

\begin{lem} \label{start} Let $R \subseteq S$ be an extension of rings, and suppose that there exists a mulitplicatively closed subset $C$  of $R$ consisting of  nonzerodivisors in $S$ such that $S/R$ is $C$-torsion and $C$-divisible.
 Then  the following statements are equivalent.

\begin{itemize}

\item[(1)] $R \subseteq S$ is a quadratic extension of rings. \index{quadratic extension}

\item[(2)]  For all $s \in S$, $(R:_R s) = (R:_R s)S \cap R$.

\item[(3)]  $S/R$ admits an $S$-module structure. \index{admits an $S$-module structure}


\item[(4)]  There exists an
$S$-module $T$ and a  derivation $D:S \rightarrow T$ with  $R =
\Ker D$.

\item[(5)] The mapping $S/R \rightarrow \Omega_{S/R}$ induced by $d_{S/R}$ is an   isomorphism of $R$-modules.

\item[(6)]  For all $c \in {{{{\it C}}}}$, $(R \cap cS)^2 \subseteq cR$.



\end{itemize}

\end{lem}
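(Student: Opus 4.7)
The plan is to establish the equivalences via the cyclic chain $(1) \Rightarrow (6) \Rightarrow (3) \Rightarrow (4) \Rightarrow (5) \Rightarrow (1)$, with $(2) \Leftrightarrow (3)$ handled separately by Lemma~\ref{admits}. The $C$-divisibility of $S/R$ (equivalently, $S = R + cS$ for each $c \in C$) will play a crucial role throughout.

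For $(1) \Rightarrow (6)$, given $a = cs_1$ and $b = cs_2$ in $R \cap cS$, apply the quadratic condition to write $s_1 s_2 = u + s_1 \gamma + s_2 \delta$ with $u, \gamma, \delta \in R$; then $ab = c^2 u + c(cs_1)\gamma + c(cs_2)\delta = c(cu + a\gamma + b\delta) \in cR$. For $(6) \Rightarrow (3)$, define the $S$-action on $S/R$ by $s \cdot (t + R) := rt + R$, choosing $c \in C$ with $ct \in R$ and $r \in R$ with $s - r \in cS$ (possible by $C$-divisibility). Well-definedness reduces to showing $(r_1 - r_2) t \in R$ whenever $r_1 - r_2 \in R \cap cS$, and this follows from (6) applied to $r_1 - r_2$ and $ct$ (both in $R \cap cS$) followed by cancellation of the nonzerodivisor $c$. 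Lemma~\ref{admits}(1) applied with $T = S/R$ then yields $(3) \Leftrightarrow (2)$, since the annihilator in $R$ of $s + R$ is $(R:_R s)$.

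The central step is $(3) \Rightarrow (4)$: take $T = S/R$ with the $S$-module structure from (3), and let $D = \pi$ be the canonical projection, which is $R$-linear with $\Ker D = R$. To verify Leibniz, Lemma~\ref{admits}(2) gives $s \cdot \overline{t} = \overline{r_1 t}$ and $t \cdot \overline{s} = \overline{r_2 s}$ where $s - r_1 \in (R:_R t)S$ and $t - r_2 \in (R:_R s)S$, and a direct expansion yields
\[
\pi(st) - s\pi(t) - t\pi(s) \;=\; \overline{(s - r_1)(t - r_2) - r_1 r_2} \;=\; \overline{(s - r_1)(t - r_2)}.
\]
Now choose $c \in C$ with $cs, ct \in R$ and take $r_1, r_2 \in R$ with $s = r_1 + c\sigma$ and $t = r_2 + c\tau$ via $C$-divisibility; then $(s - r_1)(t - r_2) = c^2 \sigma\tau$. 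Since $c^2 \sigma = cs - cr_1 \in R$ and likewise $c^2 \tau \in R$, both elements lie in $R \cap c^2 S$, so applying (6) with $c^2 \in C$ gives $c^4 \sigma\tau \in c^2 R$, and cancelling $c^2$ yields $c^2 \sigma\tau \in R$. Thus Leibniz holds.

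Finally, $(4) \Rightarrow (5)$: the universal property factors $D = \alpha \circ d_{S/R}$, forcing $\Ker d_{S/R} \subseteq \Ker D = R$, which gives injectivity of the map $\phi: S/R \to \Omega_{S/R}$ induced by $d_{S/R}$. For surjectivity, first $(4) \Rightarrow (2)$ via the Leibniz computation $D(rs) = rD(s) = \sum \sigma_i(D(a_i s) - sD(a_i)) = 0$ (using $a_i, a_i s \in R$), and hence $(3)$; then one checks $\phi$ is $S$-linear via the identity $(s - r_1)d(t) = \sum \sigma_i d(a_i t) = 0$, so its image is an $S$-submodule containing the $S$-generators $d(S)$ of $\Omega_{S/R}$. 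For $(5) \Rightarrow (1)$, pulling back the $S$-module structure through $\phi$, Leibniz $d(st) = sd(t) + td(s)$ translates to $\overline{st} = s \cdot \overline{t} + t \cdot \overline{s}$, so by Lemma~\ref{admits}(2) we obtain $st - r_1 t - r_2 s \in R$ and hence $st \in sR + tR + R$. The main obstacle is the Leibniz verification in $(3) \Rightarrow (4)$, where the trick of selecting $r_1, r_2$ via $C$-divisibility so that $(s - r_1)(t - r_2) = c^2 \sigma\tau$ and then invoking (6) at the higher power $c^2$ is essential.
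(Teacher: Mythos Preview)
Your chain has a genuine logical gap. In the step you label $(3)\Rightarrow(4)$ you explicitly invoke condition (6) to conclude $c^2\sigma\tau\in R$. But at that point you are only entitled to assume (3); condition (6) was derived from (1), not from (3). The net effect is that what you have actually proved is $(1)\Rightarrow(6)\Rightarrow(4)\Rightarrow(5)\Rightarrow(1)$ (using $(6)\Rightarrow(3)$ as a lemma for the second arrow), together with $(4)\Rightarrow(2)\Leftrightarrow(3)$. Thus (2) and (3) follow from the other four conditions, but nowhere do you show that (2) or (3) implies any of (1), (4), (5), (6). The cycle does not close through (3), so the six statements are not yet shown to be equivalent.

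There are two easy repairs. The paper's route is to prove $(3)\Rightarrow(4)$ using only the $S$-module action on $S/R$: with $s_i=r_i+c_i\sigma_i$ and $c_1s_2,\,c_2s_1\in R$, first note $c_1c_2\sigma_1\in R$ (since $c_2s_1\in R$ gives $c_2c_1\sigma_1+R=c_2s_1+R=0$), and then compute inside the $S$-module $S/R$
\[
0 \;=\; \sigma_1\cdot\overline{c_1s_2}
\;=\; c_1\sigma_1\cdot\overline{c_2\sigma_2}
\;=\; (c_1c_2\sigma_1)\cdot\overline{\sigma_2}
\;=\; \overline{c_1c_2\sigma_1\sigma_2},
\]
the last equality holding because $c_1c_2\sigma_1\in R$ and the $S$-action extends the $R$-action. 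This yields $(s-r_1)(t-r_2)\in R$ from (3) alone. Alternatively, insert the cheap implication $(2)\Rightarrow(6)$: if $a=c\sigma$ and $b=c\tau$ lie in $R\cap cS$, then $c\in(R:_R\tau)$ and $c\sigma\in(R:_R\tau)S\cap R=(R:_R\tau)$ by (2), so $ab=c(c\sigma\tau)\in cR$. Since you already have $(3)\Leftrightarrow(2)$ via Lemma~\ref{admits}, this makes (6) legitimately available when you assume (3), and your Leibniz argument then goes through.
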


\begin{proof}
(1) $\Rightarrow$ (2) Let $s \in S$, and suppose that $x \in (R:_R
s)S \cap R$. We claim that $x \in (R:_R s)$. Since $S/R$ is ${{{{\it
C}}}}$-torsion, there exists $c \in {{{{\it C}}}}$ such that $cs \in
R$. Also, since $S = R + cS$, we have $(R:_Rs)S = (R:_Rs) + c(R:_Rs)S \subseteq (R:_Rs) + cS$, so that 
 there exist $a \in (R:_R s)$ and
$\sigma \in S$ such that $x = a +c\sigma$.  By (1), $s \sigma \in sR
+ \sigma R + R$, so $cs \sigma   \in   csR + c\sigma   R + cR$.
Thus, since $cs \in R$, $c\sigma = x - a \in R$ and $c \in R$, it
follows that $c\sigma \in (R:_R s)$.  By assumption, $a \in (R:_R
s)$, so we conclude that $x = a + c\sigma \in (R:_R s)$, which
proves (2).

(2) $\Rightarrow$ (3) Since  $S/R$ is ${{{{\it C}}}}$-divisible and
$C$-torsion, and by (2), $(R:_R s)S \cap R = (R:_R s)$ for all $s
\in S$, we have by Lemma~\ref{admits}(1) that $S/R$ admits an
$S$-module structure.

(3) $\Rightarrow$ (4) Define a mapping $D:S \rightarrow S/R$ by
$D(s) = s+R$ for all $s \in S$.  By (3), $S/R$ has an $S$-module
structure, which by Lemma~\ref{admits}(2) must be  given by the
operation $\cdot$ defined in the lemma. We claim that with this $S$-module
structure on $S/R$, $D$ is a derivation, for once this is proved,
(4) follows at once.
Let $s_1,s_2 \in S$.  Then since $S/R$ is ${{{{\it C}}}}$-divisible
and ${{{{\it C}}}}$-torsion, there exist $r_1,r_2 \in R$,
$\sigma_1,\sigma_2 \in S$, $c_1 \in (R:_R s_2)$ and $c_2 \in (R:_R
s_1)$ such that $s_1= r_1 + c_1\sigma_1$ and $s_2 =
r_2+c_2\sigma_2$. Then applying the definition of $\cdot$ we have:
$$s_1 \cdot D(s_2) + s_2 \cdot D(s_1) =
s_1 \cdot (s_2+R) + s_2 \cdot (s_1+R) = r_1s_2 + r_2s_1 +R.$$ Hence
to prove that $D(s_1s_2) = s_1 \cdot D(s_2) + s_2 \cdot D(s_1)$, it
suffices to show that $D(s_1s_2) = r_1s_2 + r_2s_1 + R$. Now
\begin{eqnarray*}D(s_1s_2)\: \:  = \:\: s_1s_2 +R
& = & (r_1+c_1\sigma_1)(r_2 + c_2\sigma_2) + R \\
\: & = & r_1c_2\sigma_2 + r_2c_1\sigma_1 + c_1c_2\sigma_1\sigma_2+R
\\
\: & = & r_1(s_2 - r_2) + r_2(s_1-r_1) +  c_1c_2\sigma_1\sigma_2 + R \\
\: & = & r_1s_2 + r_2s_1 + c_1c_2\sigma_1\sigma_2 + R.
\end{eqnarray*}
Therefore,   we need only verify that $c_1c_2\sigma_1\sigma_2 \in
R$. To this end, observe that since $c_2 \in (R:_R s_1)$,  then $0 +R=
c_2s_1 +R = c_2(r_1+c_1\sigma_1) + R = c_2c_1\sigma_1 + R,$ so
$c_1c_2\sigma_1 \in R$.
 Hence, using the fact that $\cdot$ extends the $R$-module
 structure  on $S/R$, as well as the fact that
 $c_1$, $c_2$, $c_1s_2$ and $c_1c_2\sigma_1$ are all members of $R$, we have:
\begin{eqnarray*} 0+R  &= & \sigma_1 \cdot (c_1s_2 + R) \:\: =
\:\: c_1\sigma_1 \cdot (r_2 +
c_2\sigma_2 + R) \\
\: &= &  c_1\sigma_1 \cdot (c_2\sigma_2 +R) \:\: = \:\:
c_1c_2\sigma_1 \cdot (\sigma_2 + R) \:\: = \:\: c_1c_2\sigma_1\sigma_2 + R
\end{eqnarray*}  Therefore, $c_1c_2\sigma_1\sigma_2 \in R$, which
proves that $D(s_1s_2) = s_1 \cdot D(s_2) + s_2 \cdot D(s_1)$.
Clearly, $D(s_1+s_2) = D(s_1) + D(s_2)$, so $D$ is a derivation.

(4) $\Rightarrow$ (1) Let $x,y \in S$.  We show that $xy \in xR + yR
+ R$. Since $S/R$ is ${{{{\it C}}}}$-torsion, there exists  $c \in
{{{{\it C}}}}$ such that $cx,cy \in R$.  Thus, since $c \in R = \Ker
D$, we have $cD(x) = D(cx) = 0$ and $cD(y) = D(cy) = 0$.  Also,
since $S/R$ is ${{{{\it C}}}}$-divisible, there exist $a,b \in R$
such that $x+a,y+b \in cS$. Hence  $(x+a)  D(y) =0$ and $(y+b)
 D(x) =0$. Therefore, since also $D(a) = D(b) = 0$, we have:
\begin{eqnarray*}
D(xy+ya+xb) & = & D(xy) + D(ya) + D(xb) \\
\: & = & x  D(y)+ y  D(x) + a  D(y)  + b  D(x)   \\
 \: & = & (x+a)  D(y) + (y+b) D(y) \: \: = \: \:  0.
 \end{eqnarray*} Thus $xy + ya + xb \in \Ker D = R$, whence $xy \in xR + yR + R$.

(4) $\Rightarrow$ (5)  Assuming (4), there exists an $S$-module
homomorphism $\alpha:\Omega_{S/R} \rightarrow T$ such that $\alpha
\circ d_{S/R} = D$ (see Section 1).  Thus $\Ker
d_{S/R} \subseteq \Ker D = R$. But $d_{S/R}$ is an $R$-linear
derivation, so it must be that $\Ker d_{S/R}  = R$. Thus to verify (5), it suffices to show that 
 $d_{S/R}$ is an onto mapping.  
To this end, note that since $d_{S/R}(S)$ generates $\Omega_{S/R}$ as an $S$-module, there
 exist $s_1,\ldots,s_n,x_1,\ldots,x_n \in S$ such that $y
= \sum_{i=1}^n s_i d(x_i)$.  Choose $c \in {{{{\it C}}}}$ such
that $cx_1 ,\ldots,cx_n \in R$.  Then since $S = \Ker d_{S/R}  + cS$, we may for
each $i$ write $s_i = a_i + c\sigma_i$, where $a_i \in \Ker d_{S/R}$ and
$\sigma_i \in S$.  Thus, since $a_1,\ldots,a_n \in \Ker d_{S/R}$,  we have: $$y = \sum_{i}d_{S/R}(a_ix_i) +
\sum_{i}\sigma_i cd_{S/R}(x_i)  =
d_{S/R}(\sum_i a_ix_i).$$ Therefore, $d_{S/R}$ maps  onto $\Omega_{S/R}$, and  $\Omega_{S/R}$ and $S/R$ are isomorphic as
$R$-modules.

(5) $\Rightarrow$ (3)  Since $\Omega_{S/R}$ is an $S$-module, this
is clear.

(1) $\Rightarrow$ (6) Let $c \in C$, and let $s_1,s_2 \in S$ such that $cs_1,cs_2 \in R$.  It suffices to show that $c^2s_1s_2 \in cR$.  By (1), $s_1s_2 \in s_1R + s_2R + R$, so that $c^2s_1s_2 \in c(cs_1)R + c(cs_2)R + c^2R \subseteq cR$, as claimed.


(6) $\Rightarrow$ (2)  Let $s \in S$, and let $x \in (R:_R s)S \cap
R$.  Since $S/R$ is $C$-torsion, there exists $c \in C \cap (R:_Rs)$.  Also,  since $S/R$ is  $C$-divisible, we have $S = R + cS$, and hence $(R:_R s)S = (R:_Rs) + cS$.  
Thus there exist 
 $a \in (R:_R s)$ and
$\sigma \in S$ such that $x = a+c\sigma$, and  in order to show that
$x \in (R:_R s)$, it suffices to prove that $c\sigma s \in R$. Since
$c \sigma = x -a \in R$ and $cs \in R$, we have  $(c\sigma)(cs) \in
(cS \cap R)^2$, so that by (6), $c^2\sigma s \in cR$.  Thus since
$c$ is a nonzerodivisor in $S$,  $c \sigma s \in R$, as claimed.
\end{proof}

Theorem~\ref{complete char} shows that the completion of a bad stable domain has a prime ideal whose square is $0$ and whose residue ring is a DVR.  Ideals in such rings have principal reductions of reduction number at most $1$:

\begin{lem} \label{stable null}  Let  $N$ be an ideal of the ring $R$
such that $N^2 = 0$.  If $I$ is an ideal of $R$ whose image in $R/N$
is a principal ideal generated by $x+N$ for some $x \in I$, then
$I^2 = xI$. 
\end{lem}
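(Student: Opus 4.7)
The plan is to reduce the problem to an explicit decomposition $I = xR + (I \cap N)$ and then compute $I^2$ directly, using $N^2 = 0$ to annihilate the unwanted cross term.

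First I would unpack the hypothesis. Saying that the image of $I$ in $R/N$ is principal generated by $x + N$ means $(I + N)/N = (xR + N)/N$, i.e.\ $I + N = xR + N$. Combining this with $x \in I$, a one-line check shows that
\[
I = xR + (I \cap N).
\]
Indeed, given $a \in I$, write $a = xr + n$ with $r \in R$, $n \in N$; then $n = a - xr \in I$, so $n \in I \cap N$, and the reverse inclusion is immediate since $x \in I$.

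Next I would expand $I^2$ using this decomposition:
\[
I^2 = \bigl(xR + (I \cap N)\bigr)^2 \subseteq x^2 R + x(I \cap N) + (I \cap N)^2.
\]
Since $I \cap N \subseteq N$ and $N^2 = 0$, the last term vanishes, leaving $I^2 \subseteq x^2 R + x(I \cap N) = xI$. The reverse inclusion $xI \subseteq I^2$ is immediate from $x \in I$, so $I^2 = xI$.

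There is essentially no obstacle here: once the decomposition $I = xR + (I \cap N)$ is in hand, the identity $I^2 = xI$ falls out from $N^2 = 0$. The only step that requires a moment's thought is translating the principal-image hypothesis into the explicit equality $I = xR + (I \cap N)$, after which the rest is a two-line multiplication.
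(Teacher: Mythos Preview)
Your proof is correct and follows essentially the same idea as the paper: decompose elements of $I$ as $xr + n$ with $n \in N$ and use $N^2 = 0$ to kill the cross term. The only difference is presentational---you work at the level of the ideal decomposition $I = xR + (I \cap N)$, while the paper carries out the computation element by element and in doing so records the slightly sharper fact that $yz \in x(x,y,z)R$ for all $y,z \in I$; neither this sharper form nor the extra bookkeeping is needed for the lemma as stated.
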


\begin{proof}
 To prove that $I^2 = xI$, it suffices to show that
for all $y,z \in I$, $yz \in x(x,y,z)R = (x^2,xy,xz)R$.  Let $y,z
\in I$. Write $y = xr_1 + n_1$ and $z = xr_2 + n_2$ for some
$r_1,r_2 \in R$ and $n_1,n_2 \in N$.  Observe that since $N^2 = 0$,
we have:
\begin{center}
 $yz = x^2r_1r_2 + xr_1n_2 + xr_2n_1$ \ \ \ \ \
 $xy = x^2r_1 + xn_1$ \ \ \ \ \
 $xz = x^2r_2 + xn_2$.
\end{center}
{\noindent}The following calculation now shows that $yz \in
(x^2,xy,xz)R$:
\begin{eqnarray*} -r_1r_2(x^2) + r_2(xy) + r_1(xz) &
= & -r_1r_2x^2 + r_2(x^2r_1+xn_1) + r_1(x^2r_2 + xn_2) \\
 \: & = & -r_1r_2x^2 + r_1r_2x^2 + r_2xn_1 + r_1r_2x^2 + r_1xn_2 \\
\: & = & r_1r_2x^2 + r_2xn_1 + r_1xn_2 \:\: = \:\:  yz.
\end{eqnarray*}
Thus $yz \in x(x,y,z)R \subseteq xI$, which proves that $I^2 = xI$.
\end{proof}



We give in the next theorem the characterization of the completion of a bad stable ring, but there is a small subtlety in how it is phrased.  The theorem is proved for not-necessarily-Noetherian rings, and because of this the domain $R$ need not be separated in the ${\ff m}$-adic topology.  (A bad stable domain is easily seen to be separated, but in the converse, in showing that the given domain is a bad stable domain, separation is needed to guarantee that $R \rightarrow \widehat{R}$ is a flat embedding.)  Thus we state the theorem for the {\it completion of $R$ in the  ideal topology (or $R$-topology)}: $\wt{R}:=\lim_{\leftarrow}R/rR$, where $r$ ranges over all nonzero elements  of $R$.  For properties and applications of this completion, see \cite{FS,M1,M2}.   When $R$ is a one-dimensional Noetherian ring, which is the main case of  interest in this article, then $\wt{R}$ is isomorphic as an $R$-algebra to $\widehat{R}$, and we deduce in Corollary~\ref{Noetherian completion} an ${\ff m}$-adic version of the theorem in the Noetherian case.

\begin{thm} \label{complete char} A one-dimensional quasilocal domain is a bad stable ring  if and only if there is a nonzero prime ideal $P$ of $\wt{R}$ such that $P^2 = 0$ and $\wt{R}/P$ is a DVR.  
\end{thm}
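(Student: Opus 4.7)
For the forward direction, I would apply Proposition~\ref{ar stable char}: $\overline{R}$ is a DVR, $R \subseteq \overline{R}$ is quadratic, and $\overline{R}/R$ is divisible. For each nonzero $r \in R$, divisibility gives $\overline{R} = R + r\overline{R}$, hence $R/(R \cap r\overline{R}) \cong \overline{R}/r\overline{R}$; since $R \cap r\overline{R} = r(R :_{\overline{R}} r)$, multiplication by $r$ produces a short exact sequence
$$0 \to (R :_{\overline{R}} r)/R \to R/rR \to \overline{R}/r\overline{R} \to 0,$$
whose left-hand ideal squares to zero in $R/rR$ by Lemma~\ref{start}(6). Divisibility of $\overline{R}/R$ also makes the transition maps of the left system surjective as $r$ varies, so passing to inverse limits yields
$$0 \to P \to \wt{R} \to \widehat{\overline{R}} \to 0,$$
where $\widehat{\overline{R}}$ is the $R$-adic (equivalently $\pi$-adic) completion of $\overline{R}$, hence a DVR. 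The ideal $P$ has $P^2 = 0$ (inherited from each term), is prime (the quotient is a domain), and is nonzero (since $\overline{R} \ne R$).

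For the reverse direction, set $V = \wt{R}/P$. First I would show $R \hookrightarrow V$. The kernel of $R \to V$ is a prime of $R$; if it were $\mathfrak{m}_R$, then every $a \in \mathfrak{m}_R$ would have image in $P$, forcing the image of $a^2$ in $\wt{R}$ into $P^2 = 0$. But $\ker(R \to \wt{R}) = \bigcap_{0 \ne r \in R} rR$, so $a^2 \in a^3 R$ for nonzero $a$, giving $a$ a unit---a contradiction. Letting $F = \mathrm{Frac}(R) \subseteq \mathrm{Frac}(V)$ and $W = V \cap F$, the $\pi$-adic valuation of $V$ restricts to a nontrivial discrete valuation on $F$ (nontrivial because $\mathfrak{m}_R$ contains elements of positive $\pi$-valuation), so $W$ is a DVR of $F$ containing $R$.

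Next I would verify that $R \subseteq W$ is quadratic, using condition (6) of Lemma~\ref{start}. The cofinal system of multiples of any $c \in R \setminus \{0\}$ yields $\wt{R}/c\wt{R} \cong R/cR$, and under this identification $\bar{P} := (P + c\wt{R})/c\wt{R}$ squares to zero (using $P^2 = 0$). Given $x, y \in R \cap cW$, their images in $V/cV$ vanish, so $x + cR,\, y + cR \in \bar{P}$, and hence $xy \in cR$. Divisibility of $W/R$ follows from the density $\wt{R} = R + c\wt{R}$: projecting to $V$ and intersecting with $F$ gives $W = R + cW$, using that $cV \cap F = cW$ for $c \in R^*$.

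Finally, for any $w \in W$, applying the quadratic condition with $s = t = w$ gives $w^2 \in wR + R$, so $w$ satisfies a monic polynomial of degree two over $R$ and is therefore integral; hence $w \in \overline{R}$. Combined with the inclusion $\overline{R} \subseteq W$ (since $W$ is integrally closed in $F$), this yields $\overline{R} = W$. Because $P \ne 0$ prevents $\wt{R}$ from being a domain, $R$ itself cannot be a DVR, so Proposition~\ref{ar stable char} concludes that $R$ is a bad stable domain. The main obstacle is the quadratic verification: it hinges on the identification $\wt{R}/c\wt{R} \cong R/cR$, which must be proved directly from the definition of the $R$-topological completion via a cofinal argument, and on carefully tracking the nilpotent ideal $P$ through the relevant quotients and intersections.
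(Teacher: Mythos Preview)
Your forward direction is essentially the paper's argument made explicit: both show that the kernel $P$ of the natural map $\wt{R}\to\widehat{\overline{R}}$ satisfies $P^2=0$ via Lemma~\ref{start}(6). The paper obtains the surjection by citing Matlis (using that the quotient field has projective dimension~$1$), while you build it by hand through the exact sequences $0\to (R\cap r\overline{R})/rR\to R/rR\to \overline{R}/r\overline{R}\to 0$ and a Mittag--Leffler argument; the surjectivity of the left-hand transition maps that you invoke is indeed immediate from $\overline{R}=R+s\overline{R}$.

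Your reverse direction, however, is a genuinely different route. The paper never identifies $\overline{R}$; instead it lifts an arbitrary ideal $I$ to $\wt{R}$, uses that $I\wt{R}$ becomes principal modulo $P$ in the DVR $\wt{R}/P$, applies Lemma~\ref{stable null} to get $I^2\wt{R}=xI\wt{R}$, and then descends to $I^2=iI$ in $R$ using the divisibility of $\wt{R}/\lambda(R)$ and flatness of $\wt{R}$ over $R$ (both cited from Matlis). Badness is then a one-line observation about nilpotents in $\wt{R}$. You instead pull back the DVR $V=\wt{R}/P$ to $W=V\cap F$, verify condition~(6) of Lemma~\ref{start} for $R\subseteq W$ via the identification $\wt{R}/c\wt{R}\cong R/cR$, establish divisibility of $W/R$, show $W=\overline{R}$ by the degree-two integrality trick, and invoke Proposition~\ref{ar stable char}. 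Both arguments ultimately rest on the same Matlis facts (divisibility of $\wt{R}/\lambda(R)$ and flatness of $\wt{R}$, which underlie your isomorphism $\wt{R}/c\wt{R}\cong R/cR$), so neither is more elementary; the paper's is shorter, while yours has the advantage of explicitly exhibiting the normalization of $R$ as $(\wt{R}/P)\cap F$, which is extra structural information not visible in the paper's proof.
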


\begin{proof}  
Let  $R$ be a bad stable ring, and let $r$ be a nonzero nonunit in $R$.      Then since $R$ has dimension $1$, $Q:=R[1/r]$ is the quotient field of  $R$, and hence $Q$ is  a countably generated $R$-module.  By a lemma of Auslander, this in turn implies that  $Q$ has projective dimension $1$ as an $R$-module \cite[Lemma VI.2.6, p.~203]{FS}.    Since we are working with the completion of $R$ in the ideal topology, and $Q$ has projective dimension $1$, there is a surjective ring homomorphism   $\psi: 
 \wt{R} \rightarrow
{(\overline{R})}^\sim $, where 
 $(\overline{R})^\sim$ is the completion of $\overline{R}$ in the ideal topology of $\overline{R}$ \cite[Theorem 2.9, p.~21]{M1}.
 Since $\overline{R}$ is a DVR that is integral over $R$, it follows that the $R$-topology and $\overline{R}$-topology on $\overline{R}$ agree, so with $S:=\lim_{\leftarrow}\overline{R}/r\overline{R}$, where $r$ ranges over the nonzero elements of $R$,   
 the canonical mapping $\phi:\wt{R} \rightarrow S$  is a surjection.  
  Now $S$, as completion of a DVR, is a DVR, and since $R$ is not a DVR, $P:=\Ker \phi \ne 0$, so we need only show that $P^2 = 0$.  To this end, let $x,y \in P$.  Write $x = \l x_r + rR \r$ and $y = \l y_r + rR\r$.  (We are viewing $\wt{R}$ as a subring of $\prod_{r}R/rR$ and $S$ as a subring of $\prod_R \overline{R}/r\overline{R}$.)  Then $0 = \phi(x) = \l x_r + r\overline{R} \r$, so that for each $r$, $x_r \in r\overline{R} \cap R$.  Similarly, $y_r \in r\overline{R} \cap R$, and hence by Lemma~\ref{start}(6), $x_ry_r \in (r\overline{R} \cap R)^2 \subseteq rR$, proving that $xy = 0$, and hence that $P^2 =0$. 

Conversely, suppose that there is a nonzero prime ideal $P$ of $\wt{R}$ such that $P^2 =0$ and $\wt{R}/P$ is a DVR.  If $I$ is a nonzero proper ideal of $R$, then since the image of $I\wt{R}$ in $\wt{R}/P$ is a principal ideal, we have by Lemma~\ref{stable null} that there exists $x \in 
I\wt{R}$ such that $I^2\wt{R} =xI\wt{R}$.  We show that this implies 
that there exists $i \in I$ such that $I^2 = iI$.  
With $\lambda:R \rightarrow \wt{R}$  the canonical map, the $R$-module
$\wt{R}/\lambda(R)$ is  divisible \cite[Theorem 2.1, p.~11]{M1}.  Thus   $x \in  \lambda(R)+x^2\wt{R}$, and  there exist $y \in \wt{R}$ and $i \in R$ with  $x(1-xy) =\lambda(i)$.  Since $x$ is not a unit in the quasilocal ring $\wt{R}$, it follows that $x\wt{R} = i\wt{R}$.  Moreover, since $\wt{R}$ is a flat $R$-module \cite[Corollary 2.6]{M1}, $i \in \lambda^{-1}(I\wt{R}) = I$. This shows that there exists $i \in I$ such that $I^2\wt{R}= iI\wt{R}$.  Again by flatness, $I^2 = iI$.   Thus   
 $I$ is  a stable ideal of $R$, and $R$ is a stable domain.
 Moreover, $R$ is a bad stable domain.  For suppose $R$ has finite normalization.  Then $R$ is a one-dimensional local Noetherian domain, so that  $\widetilde{R} = \widehat{R}$.  Thus since $R$ has  finite normalization, then $\widetilde{R}$ is reduced, contrary to the fact that $P$ is a nonzero ideal of $\widetilde{R}$ with $P^2 = 0$.      
\end{proof}

Restricting to Noetherian rings, the ${\ff m}$-adic and ideal completions agree, so we obtain

\begin{cor} \label{Noetherian completion} A local Noetherian domain is a bad stable domain if and only if there is a nonzero prime ideal $P$ of $\widehat{R}$ such that $P^2=0$ and $\widehat{R}/P$ is a DVR. \qed
\end{cor}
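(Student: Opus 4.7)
The plan is to reduce Corollary~\ref{Noetherian completion} directly to Theorem~\ref{complete char} by observing that, under the hypotheses of the corollary, the ideal completion $\wt{R}$ and the ${\ff m}$-adic completion $\widehat{R}$ coincide as $R$-algebras. Once that identification is made, the characterization stated in the theorem translates verbatim into the statement of the corollary.

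First, I would reduce both implications to the one-dimensional Noetherian case. If $R$ is a bad stable domain, then by the definition in Section~\ref{(S)}, $R$ has Krull dimension one. Conversely, if $\widehat{R}$ contains a nonzero prime $P$ with $P^2 = 0$ and $\widehat{R}/P$ a DVR, then $P$ is nilpotent, so $\dim \widehat{R} = \dim(\widehat{R}/P) = 1$, and therefore $\dim R = 1$.

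Next, for a one-dimensional local Noetherian domain $R$, I would verify that the ${\ff m}$-adic topology on $R$ coincides with its ideal topology. On one hand, every power ${\ff m}^n$ contains a nonzero principal ideal, simply by choosing any nonzero element inside ${\ff m}^n$. On the other hand, for each nonzero $r \in R$, the ring $R/rR$ is a zero-dimensional Noetherian ring and hence Artinian, so ${\ff m}^n \subseteq rR$ for some $n$. The two fundamental systems of neighbourhoods of $0$ therefore refine each other, and the canonical map $\widehat{R} \to \wt{R}$ is an isomorphism of $R$-algebras.

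With this identification in hand, Theorem~\ref{complete char} applied to the one-dimensional quasilocal domain $R$ says exactly that $R$ is a bad stable domain if and only if $\widehat{R}$ contains a nonzero prime $P$ with $P^2 = 0$ and $\widehat{R}/P$ a DVR. I do not anticipate any real obstacle here: the substantive work has already been absorbed into Theorem~\ref{complete char}, and the corollary is essentially a bookkeeping statement about which completion is being used.
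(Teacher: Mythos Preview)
Your proposal is correct and follows exactly the paper's approach: the paper's proof is simply the observation (made just before the corollary) that for a one-dimensional local Noetherian domain the ideal topology and the ${\ff m}$-adic topology agree, so $\wt{R}\cong\widehat{R}$ and Theorem~\ref{complete char} applies. You supply the details the paper leaves implicit, in particular the reduction of the converse direction to dimension one via the observation that $P^2=0$ forces $P$ to be the nilradical of $\widehat{R}$, whence $\dim R=\dim\widehat{R}=\dim(\widehat{R}/P)=1$.
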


\section{Characterizations of bad stable rings}

\label{stable chars}

\label{2: application: bad stable rings}

An $R$-module $K$ is {\it uniserial} if the set of all
$R$-submodules of $K$ is linearly ordered with respect to inclusion.

\index{Artinian uniserial module} \begin{lem} \label{uniserial quadratic} Let $R \subseteq S$ be an integral  extension of rings, with $S$ a quasilocal ring.
  If $S/R$ is an Artinian uniserial $R$-module,
then $R \subseteq S$ is a quadratic extension.
\end{lem}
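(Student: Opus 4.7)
Since $R \subseteq S$ is an integral extension with $S$ quasilocal (with maximal ideal $\mathfrak{M}$), lying over and going up force $R$ to be quasilocal as well, with maximal ideal $\mathfrak{m} := \mathfrak{M} \cap R$. My plan is to prove $st \in sR + tR + R$ for all $s,t \in S$ by first reducing to the one-variable statement ``$t^2 \in tR + R$ for every $t \in S$,'' and then proving the latter by exploiting that $R[t]/R$ is cyclic.

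For the reduction, I will apply the uniserial hypothesis to the cyclic $R$-submodules $\overline{s}R$ and $\overline{t}R$ of $S/R$: these are comparable, so after possibly swapping $s$ and $t$ I may write $s = ta + b$ with $a,b \in R$. Then $st = at^2 + bt$, and the containment $t^2 \in tR + R$ immediately yields $st \in tR + R \subseteq sR + tR + R$.

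Now fix $t \in S \setminus R$. Integrality of $R \subseteq S$ makes $R[t]$ a finitely generated $R$-module, so $R[t]/R$ is a finitely generated submodule of the uniserial module $S/R$. A finitely generated uniserial module is cyclic (any finite generating set is totally ordered, so its maximum generates the whole), so I may choose $u \in R[t]$ with $R[t] = R + Ru$. Because $R[t]$ is closed under multiplication, there exist $a,b \in R$ with $u^2 = a + bu$, and because $t \in R[t]$, there exist $\alpha,\beta \in R$ with $t = \alpha + \beta u$. If I can show $\beta$ is a unit in $R$, then $u = \beta^{-1}(t - \alpha) \in R + Rt$, forcing $R[t] = R + Ru = R + Rt$, and hence $t^2 \in R[t] = R + Rt$, as desired.

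The main obstacle, and the heart of the argument, is showing $\beta \notin \mathfrak{m}$. The quadratic relation $u^2 = a + bu$ gives $(\beta u)^2 = a\beta^2 + b\beta(\beta u) \in R + R(\beta u)$, so $R + R(\beta u)$ is a subring of $S$, and it contains $t = \alpha + \beta u$; therefore $R[t] \subseteq R + R(\beta u)$. Combining with $R[t] = R + Ru$ gives $u \in R + R\beta u$, and so $u(1 - s\beta) = r$ for some $r,s \in R$. If $\beta$ were in $\mathfrak{m}$, then $s\beta \in \mathfrak{m}$, making $1 - s\beta$ a unit of $R$ and therefore $u \in R$ --- contradicting $R[t] \neq R$. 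So $\beta$ is a unit, and the proof is complete. The quasilocal structure on $R$, inherited from the integrality assumption, is exactly what makes this final step work.
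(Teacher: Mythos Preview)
Your proof is correct and takes a genuinely different route from the paper's. The paper argues by transfinite induction on the chain of $R$-submodules of $S$ containing $R$: since $S/R$ is Artinian and uniserial this chain is well-ordered, and at each step one shows that the union $E'$ of all properly smaller members is a quasilocal ring with maximal ideal $M$, and that the next module $F$ must equal $(M:_S M)$, hence is itself a ring. Your argument instead reduces directly to the one-variable statement $t^2 \in tR + R$ via comparability of $\bar s R$ and $\bar t R$, then exploits integrality to make $R[t]/R$ finitely generated (hence cyclic, being uniserial), and finishes with a clean unit argument using the quasilocality of $R$ that you correctly deduce from lying-over. The paper's approach has the conceptual advantage of establishing explicitly that every intermediate $R$-module is a ring, while yours is more elementary---no transfinite induction---and in fact never invokes the Artinian hypothesis: uniseriality of $S/R$, integrality, and quasilocality of $S$ are all you use, so your argument proves a slightly stronger statement.
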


\begin{proof}
 Let $\cal{S}$  be the set of
all $R$-submodules of $S$ containing $R$, and let $\cal{R}$ be the
set of all $B \in {\cal S}$ such that $B$ is a ring.  We claim that $ {\cal{S}}= {\cal{R}}$, and hence that $R \subseteq S$ is a quadratic extension.
 Since $S/R$ is Artinian and uniserial, the set ${\cal S}$ is
well-ordered with respect to inclusion, so we may use transfinite
induction to show that ${\cal S} = {\cal R}$. In particular, to
prove that ${\cal S} = {\cal R}$, it suffices to show that for each
$F \in {\cal S}$,
$$\{E \in {\cal S}: E \subsetneq F\} \subseteq {\cal R} \: \: \:
\Rightarrow \: \: \: F \in {\cal R}.$$  Let $F \in {\cal S}$, and
suppose that $\{E \in {\cal S}: E \subsetneq F\} \subseteq {\cal
R}$. Let $E'$ be the union of all the $E \in {\cal S}$ such that $E
\subsetneq F$.  Then since each such $E$ is a ring and ${\cal S}$ is a chain, $E'$ is also a
ring.  Thus if $E' = F$, then the claim that $F \in {\cal R}$ is
proved.  Otherwise, suppose that $E' \subsetneq F$. Since $E'$ is a
ring and $S$ is quasilocal and integral over $R$, it follows that $E'$
has a unique maximal ideal $M$.  Now $F/E'$ is necessarily a simple
$E'$-module, so $MF \subseteq E'$.  In fact, $MF \subseteq M$, since
otherwise the maximality of $M$ in $E'$ implies $1 \in MF \subseteq MS$, a contradiction to the fact that
$S$ is integral over $E'$.   Therefore, $F \subseteq
 (M:_S
M)$.
 Yet $F/E' \subseteq (M:_S M)/E'$ is a containment of vector spaces
  over $R/(M \cap R)$, so the fact that
$S/R$, and hence $S/E'$, is a uniserial $R$-module implies that $(M:_S M)/E'$ is a simple
$R$-module, and this forces $F = (M:_SM)$.  Therefore, $F$  is a
ring, and $F \in {\cal R}$, which proves that ${\cal R} = {\cal S}$.
This shows that  $R \subseteq S$ is a quadratic extension.
\end{proof}


\begin{thm} \label{ar stable thm}  The
following statements are equivalent for a quasilocal domain $R$ with  quotient field $F$. 

\begin{itemize}

\index{stable domain!bad}
\item[(1)]  $R$ is bad  stable domain.


\index{DVR}
\item[(2)]  ${\overline{R}}$ is a DVR and ${\overline{R}}/R \cong \bigoplus_{i \in I}F/{\overline{R}}$ as $R$-modules
for some index set $I$.

\index{Artinian uniserial module}
\item[(3)]  ${\overline{R}}$ is a DVR and ${\overline{R}}/R$ is a direct sum of divisible
Artinian uniserial $R$-modules.

\end{itemize}
\end{thm}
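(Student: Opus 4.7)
I would prove the cycle of implications $(1) \Rightarrow (2) \Rightarrow (3) \Rightarrow (1)$, using Proposition~\ref{ar stable char} as the bridge between the ``bad stable'' property and the quadraticity/divisibility conditions, together with Lemma~\ref{start} to translate quadraticity into the admission of an $\overline{R}$-module structure on $\overline{R}/R$. The decomposition in~(2) is supplied by the classical structure theorem for divisible torsion modules over the DVR $\overline{R}$: any such module is a direct sum of copies of the Pr\"ufer module $F/\overline{R}$.

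For $(1) \Rightarrow (2)$: Proposition~\ref{ar stable char} gives that $\overline{R}$ is a DVR, that $\overline{R}/R$ is $R$-divisible, and that $R \subseteq \overline{R}$ is quadratic, so Lemma~\ref{start} furnishes $\overline{R}/R$ with an $\overline{R}$-module structure extending the $R$-structure. Since $R$ and $\overline{R}$ share the quotient field $F$, the quotient $\overline{R}/R$ is $\overline{R}$-torsion; divisibility over $\overline{R}$ reduces to divisibility by a uniformizer $\pi$, which follows from the $R$-divisibility together with the uniqueness clause of Lemma~\ref{admits}(2). The structure theorem then yields $\overline{R}/R \cong \bigoplus_{i \in I} F/\overline{R}$ as $\overline{R}$-modules, hence as $R$-modules.

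For $(2) \Rightarrow (3)$: it suffices to show that $F/\overline{R}$ is a divisible, Artinian, uniserial $R$-module. Divisibility is immediate. The $\overline{R}$-submodules of $F/\overline{R}$ form the chain $\pi^{-n}\overline{R}/\overline{R}$, so to transfer uniseriality and Artinianness to the $R$-module structure it suffices to show that every $R$-submodule of $F/\overline{R}$ is an $\overline{R}$-submodule, which in turn reduces to the identity $\overline{R} = R + \pi^n\overline{R}$ for all $n \geq 0$. The $R$-divisibility of $\overline{R}/R$ (immediate from (2)) gives $\overline{R} = R + r\overline{R}$ for any $r \in R \setminus \{0\}$; choosing $r$ of $\pi$-adic valuation $k$ and iterating the identity $\overline{R} = R + r^m\overline{R}$ yields $\overline{R} = R + \pi^{mk}\overline{R}$ for all $m$, and therefore $\overline{R} = R + \pi^n\overline{R}$ for all $n$.

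For $(3) \Rightarrow (1)$: divisibility of $\overline{R}/R$ is immediate, so Proposition~\ref{ar stable char} reduces the task to showing $R \subseteq \overline{R}$ is quadratic. By Lemma~\ref{start} this is equivalent to $\overline{R}/R$ admitting an $\overline{R}$-module structure, and by Lemma~\ref{admits}(1) to the annihilator identity $(0 :_R t)\overline{R} \cap R = (0 :_R t)$ for each $t \in \overline{R}/R$. Each such $t$ lies in a finite sub-sum of uniserial Artinian summands, where the annihilators in $R$ form a well-ordered chain matching the submodule chain; the identity can then be checked summand by summand. The main obstacle is extracting the ring-theoretic quadratic property from the purely module-theoretic direct-sum hypothesis: because $\overline{R}/R$ is not itself uniserial, Lemma~\ref{uniserial quadratic} does not apply directly, and one must argue that the divisibility plus Artinian-uniserial structure of each summand forces the annihilators to interact compatibly with the DVR structure of $\overline{R}$.
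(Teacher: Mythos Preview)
Your arguments for $(1) \Rightarrow (2)$ and $(2) \Rightarrow (3)$ are essentially the paper's, modulo cosmetic differences. The gap is in $(3) \Rightarrow (1)$. You correctly identify that Lemma~\ref{uniserial quadratic} does not apply directly to $R \subseteq \overline{R}$, but your proposed workaround---checking the annihilator identity $(0:_R t)\overline{R} \cap R = (0:_R t)$ summand by summand---is not actually carried out, and it is not clear how to do so from the hypotheses alone. Knowing that $B_i/R$ is Artinian and uniserial tells you that the ideals $(R:_R b)$ for $b \in B_i$ form a chain in $R$, but gives no direct grip on how those ideals extend to $\overline{R}$ and contract back; that is exactly the content you are trying to prove. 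Your final paragraph is really an acknowledgment that the step is missing.

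The paper closes this gap by applying Lemma~\ref{uniserial quadratic} not to $R \subseteq \overline{R}$ but to the extension $B \subseteq \overline{R}$, where $B = \sum_{j \ne i} B_j$ is the complementary sum for a fixed $i$. A short divisibility/torsion computation shows $B$ is a ring; since $\overline{R}/B \cong (\overline{R}/R)/(B/R) \cong B_i/R$ is divisible, Artinian and uniserial as a $B$-module, Lemma~\ref{uniserial quadratic} now applies to the ring extension $B \subseteq \overline{R}$, yielding quadraticity and hence (via Lemma~\ref{start}) an $\overline{R}$-module structure on $\overline{R}/B \cong B_i/R$. Thus each summand $B_i/R$, and therefore $\overline{R}/R$ itself, admits an $\overline{R}$-module structure, and Lemma~\ref{start} together with Proposition~\ref{ar stable char} finishes the argument. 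The idea you are missing is precisely this passage to the complement: it converts a single uniserial summand of $\overline{R}/R$ into the full quotient $\overline{R}/B$ of a genuine ring extension, which is where Lemma~\ref{uniserial quadratic} can bite.
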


\begin{proof}
(1) $\Rightarrow$ (2)
 Since by Proposition~\ref{ar stable char},  ${\overline{R}}/R$ is a torsion divisible $R$-module and  $R
\subseteq {\overline{R}}$ is a quadratic extension, we may apply
Lemma~\ref{start} to obtain that ${\overline{R}}/R$ is an ${\overline{R}}$-module.
Now since $\overline{R}$ and $R$ share the same quotient field, as a divisible $R$-module,  ${\overline{R}}/R$
is also a divisible ${\overline{R}}$-module.  Also, by 
Proposition~\ref{ar stable char}, ${\overline{R}}$ is a DVR, and hence every divisible ${\overline{R}}$-module is an
injective ${\overline{R}}$-module.  In particular, ${\overline{R}}/R$ is an injective
${\overline{R}}$-module.  Let $N$ denote the maximal ideal of ${\overline{R}}$. Since ${\overline{R}}$ is a
Noetherian domain, every injective ${\overline{R}}$-module is a direct sum of
indecomposable ${\overline{R}}$-modules \cite[Theorem 2.5]{MatlisNoetherian}, and since also ${\overline{R}}$ is a DVR,
 a torsion ${\overline{R}}$-module is injective and indecomposable if and
only if it is isomorphic to $F/{\overline{R}}$ (for example, combine \cite[Theorem 4]{M3} and
\cite[Theorem 4.5]{M1}). Therefore, since
${\overline{R}}/R$ is a torsion injective ${\overline{R}}$-module, we have that ${\overline{R}}/R$ is isomorphic as an ${\overline{R}}$-module, and hence an $R$-module, to a direct sum of copies of $F/{\overline{R}}$.

(2) $\Rightarrow$ (3)  By (2), ${\overline{R}}/R$ is a direct sum of $R$-modules, say ${\overline{R}}/R = \bigoplus_{i \in I}B_i/R$, where each $B_i/R \cong F/{\overline{R}}$ as $R$-modules.  Since ${\overline{R}}$ is a DVR, $F/{\overline{R}}$ is a divisible Artinian uniserial ${\overline{R}}$-module.  Thus to show that for each $i$, $B_i/R$ is a divisible Artinian uniserial $R$-module, it suffices to show that every $R$-submodule of $F/{\overline{R}}$ is also an ${\overline{R}}$-submodule. Let $A/\overline{R}$ be an $R$-submodule of $F/\overline{R}$.  We  show that $A\overline{R} \subseteq A$.    Let $a \in A$ and $x \in \overline{R}$.  Since $F/\overline{R}$ is a torsion $R$-module, there exists $0 \ne r \in R$ such that $ra \in \overline{R}$.  Since $ \overline{R}/R$ is divisible, there exist $s \in R$ and $y \in \overline{R}$  with $x = s + ry$.  Therefore, since $\overline{R} \subseteq A$, then  $xa = sa + (ra)y \in A$, which proves that every $R$-submodule of $F/\overline{R}$ is an $R$-module.

(3) $\Rightarrow$ (1)  Write ${\overline{R}}/R  = \bigoplus_{i \in I}B_i/R$, where
for each $i \in I$, $B_i/R$ is a divisible Artinian uniserial
$R$-module.
Fix $i
\in I$, and let $B = \sum_{j \ne i}B_j$.    We claim first that $B_i/R$ is
an ${\overline{R}}$-module. Indeed, since ${\overline{R}}= B_i + B$ and $R = B_i \cap B$, we
 have $B_i/R \cong (B_i+B)/B = {\overline{R}}/B$ as $R$-modules.  Now $B_i/R$ is by
 assumption a divisible Artinian uniserial $R$-module, so ${\overline{R}}/B$ is also a divisible
 Artinian uniserial $R$-module, and hence also a divisible Artinian uniserial
 $B$-module (where divisibility as a $B$-module follows from the fact that $B/R$ is a torsion $R$-module).  Observe that $B$ is a ring.  For let $a,b \in B$.  Then there exists $0 \ne r \in R$ such that $rb \in R$.  Since $B/R$ is a divisible $R$-module, there exist $c \in B$ and $s \in R$ with $a = rc + s$.  Hence $ab = (rc+s)b = (rb)c + sb \in B$, proving that $B$ is a ring.  
  Therefore, by Lemma~\ref{uniserial quadratic}, $B \subseteq {\overline{R}}$ is a quadratic extension, and hence by Lemma~\ref{start},  ${\overline{R}}/B$ is an ${\overline{R}}$-module.   But then, since $B_i/R \cong {\overline{R}}/B$, we have that $B_i/R$
  admits an ${\overline{R}}$-module structure.  Therefore, ${\overline{R}}/R$, as a direct sum of
  the $B_i/R$, also admits an ${\overline{R}}$-module structure, and by
  Lemma~\ref{start}, $R \subseteq {\overline{R}}$ is a quadratic extension.
   Thus since ${\overline{R}}$ is a DVR, we have by Proposition~\ref{ar stable char}
   that $R$ is a bad stable domain.
\end{proof}


\begin{cor} \label{stable Noetherian cor} Let $R$ be a quasilocal
domain  with quotient field $F$, and let $n>1$.  Then the following
statements are equivalent.

 \begin{itemize}

\index{stable domain!bad Noetherian}
\index{stable domain!embedding dimension of}
 \item[(1)] $R$ is a bad Noetherian stable
 domain of  embedding dimension $n$.

\index{DVR} \index{Artinian uniserial module}
      \item[(2)]  ${\overline{R}}$ is a DVR and ${\overline{R}}/R$ is a direct sum of $n-1$ divisible Artinian uniserial $R$-modules.

          \item[(3)]  ${\overline{R}}$ is a DVR and ${\overline{R}}/R \cong \bigoplus_{i=1}^{n-1} F/{\overline{R}}$ as $R$-modules.

\end{itemize}
\end{cor}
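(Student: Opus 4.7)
My strategy is to reduce to Theorem~\ref{ar stable thm}, which identifies bad stable domains with quasilocal domains whose normalization is a DVR and whose $\overline{R}/R$ decomposes as a direct sum of copies of $F/\overline{R}$ (equivalently, of divisible Artinian uniserial $R$-modules). What remains is to match the number of summands with the embedding dimension $n$ and, in the reverse direction, to establish the Noetherian property. The equivalence $(2) \Leftrightarrow (3)$ will be immediate: each divisible Artinian uniserial $R$-module summand inherits a unique $\overline{R}$-module structure by the argument for $(2) \Rightarrow (3)$ in Theorem~\ref{ar stable thm}, and over the DVR $\overline{R}$ any such module is isomorphic to $F/\overline{R}$, so the count of summands matches on both sides.

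For $(1) \Rightarrow (3)$, I would write $\overline{R}/R \cong \bigoplus_{i \in I} F/\overline{R}$ via Theorem~\ref{ar stable thm} and identify $|I|$ with $n-1$ by computing the socle $\Hom_R(R/M, \overline{R}/R)$ two ways. On the direct-sum side, $R/M$ is finitely generated and $\Hom_R(R/M, F/\overline{R}) \cong t^{-1}\overline{R}/\overline{R} \cong R/M$ (with $t$ a uniformizer of $\overline{R}$), so this socle has $R/M$-dimension $|I|$. Intrinsically, it equals $(R:_{\overline{R}}M)/R = \End(M)/R$: any $x \in \overline{R}$ with $xM \subseteq R$ also satisfies $xM \subseteq R \cap N = M$, where $N = M\overline{R}$ is the maximal ideal of $\overline{R}$ (divisibility of $\overline{R}/R$ giving $M\overline{R} = N$). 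Stability supplies $M = m_0 \End(M)$ for some $m_0 \in M$, and $m_0\overline{R} = M\overline{R} = N$ forces $m_0$ to have $\overline{R}$-valuation $1$. Multiplication by $m_0$ is then an $R$-module isomorphism $\End(M) \to M$ carrying $R$ onto $m_0 R$, so $\End(M)/R \cong M/m_0 R$. From the short exact sequence
\[
0 \to m_0R/m_0M \to M/m_0M \to M/m_0R \to 0,
\]
combined with $m_0R/m_0M \cong R/M$ and $M/m_0M = M/M^2$ of dimension $n$, I extract $\dim_{R/M}\End(M)/R = n-1$, hence $|I| = n-1$.

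For $(3) \Rightarrow (1)$, Theorem~\ref{ar stable thm} already supplies that $R$ is bad stable, so I still need $\embdim R = n$ and Noetherianness. Running the socle calculation in reverse, the socle of $(F/\overline{R})^{n-1}$ has $R/M$-dimension $n-1$ and equals $\End(M)/R$, which is therefore annihilated by $M$ and finitely generated over $R$ (by $n$ elements: $1$ together with lifts of an $R/M$-basis). Stability then makes $M = m_0\End(M)$ finitely generated, and the iso $\End(M)/M \cong M/M^2$ combined with the filtration $R/M \hookrightarrow \End(M)/M \twoheadrightarrow \End(M)/R$ yields $\dim M/M^2 = 1 + (n-1) = n$. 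The step I expect to require the most care is Noetherianness, and my plan is the following. Since $R$ is a one-dimensional local domain, every nonzero proper ideal $I$ is $M$-primary, and finite generation of $M$ will force $M^k \subseteq I$ for some $k$. The ring $R/M^k$ has finitely generated nilpotent maximal ideal, with associated graded pieces $M^i/M^{i+1}$ all realized as quotients of $M/M^2$ via multiplication by $m_0^{i-1}$ (using $M^i = m_0^{i-1}M$), hence of $R/M$-dimension at most $n$. So $R/M^k$ will have finite length and be Noetherian, $M/M^k$ will be a Noetherian $R/M^k$-module, the submodule $I/M^k$ will be finitely generated, and therefore so will $I$ over $R$.
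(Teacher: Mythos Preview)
Your proof is correct and follows essentially the same route as the paper: both identify the socle $(R:_{\overline{R}}M)/R = M^{-1}/R = \End(M)/R$ and compute its $R/M$-dimension as $|I|$ on the direct-sum side and as $n-1$ via the stability relation $M = m_0\End(M)$ on the other. You are more explicit than the paper about deducing Noetherianness in $(3)\Rightarrow(1)$; the paper leaves this step tacit once $M$ is seen to be finitely generated, where Cohen's theorem for the one-dimensional quasilocal domain $R$ would already suffice.
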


\begin{proof}
(1) $\Rightarrow$ (2)  Assume (1), and let $M$ denote the maximal ideal of $R$.  By Theorem~\ref{ar stable thm}, ${\overline{R}}$ is  a DVR and
${\overline{R}}/R$ is a direct sum of divisible Artinian uniserial $R$-modules,
say ${\overline{R}}/R = \bigoplus_{i \in I}B_i/R$, where each $B_i/R$ is a
divisible Artinian uniserial $R$-module.  For each $i$, since $B_i/R$ is Artinian,
 there exists
an $R$-submodule $D_i$ of $B_i$ such that $R \subseteq D_i \subseteq
B_i$ and $D_i/R$ is a simple $R$-module.  Hence for each $i$,  $MD_i
\subseteq R$, and we have $D_i \subseteq M^{-1}:=(R:_F M)$.  We
claim that $M^{-1}/R = \bigoplus_{i \in I}D_i/R$.  For each $i \in I$,
let $\pi_i$ be the projection of ${\overline{R}}/R$ onto $B_i/R$.  Then for each
$i$, $D_i/R \subseteq \pi_i(M^{-1}/R) \subseteq B_i/R$.  Now
$\pi_i(M^{-1}/R)$ is an $R/M$-vector space, so since $B_i/R$ is
uniserial, it must be that $\pi_i(M^{-1}/R)$ has dimension $1$ as an $R/M$-vector space, and
hence is equal to $D_i/R$.  Therefore,  $\bigoplus_{i \in I}D_i/R
\subseteq M^{-1}/R \subseteq \bigoplus_{i \in I}D_i/R$, and since each
each $D_i/R$ is a simple $R$-module, we conclude that the vector space dimension
of $M^{-1}/R$ and the cardinality $|I|$ of $I$ are the same.
    Now since $R$ is not a DVR, we have $MM^{-1} = M$, then  $M^{-1} = (M:_F M)$.  The fact that $M$ is stable then implies that
     $M = mM^{-1}$ for some $m \in M$, and hence
     $M/M^2 = mM^{-1}/mM \cong M^{-1}/M$.  The embedding dimension $n$ of $R$ is thus the same as the dimension of the $R/M$-vector space $M^{-1}/M$.
      Therefore,
       the dimension of the $R/M$-vector space $M^{-1}/R$ is $n-1$, which forces $|I| = n-1$.

 (2) $\Rightarrow$ (1)  By Theorem~\ref{ar stable thm}, $R$ is bad stable domain.  Thus, as we saw above in the conclusion of the proof of (1) $\Rightarrow$ (2), the number of generators needed for $M$ is one more than  the dimension of $M^{-1}/R$.  Moreover, as in the proof of (1) $\Rightarrow$ (2),  this dimension is the same as the number of divisible uniserial Artinian $R$-modules in the decomposition of ${\overline{R}}/R$, which by assumption is $n-1$.

 (2) $\Leftrightarrow$ (3) This is clear in view of the proof of Theorem~\ref{ar stable thm}.
 \end{proof}

The last theorem of this section shows that wherever a bad stable domain $R$  can be found, it is possible to find bad Noetherian stable overrings of $R$ with prescribed embedding dimension, as long as that embedding dimension does not surpass the embedding dimension of $R$.

\begin{thm} \label{convex} Let $R$ be a bad stable domain of embedding dimension  $n$  (where possibly $n$ is infinite).  Then:

\begin{itemize}

\item[(1)]  If $n$ is finite (equivalently, $R$ is Noetherian), then for every  ring $S$  with $R \subseteq S \subsetneq \overline{R}$,
the ring $S$ is a bad Noetherian stable domain with embedding dimension at most $n$.

\item[(2)]  For  every integer $k$ with $2 \leq k \leq n$, there is a bad Noetherian stable domain $S$ of embedding dimension $k$ with $R \subseteq S \subseteq \overline{R}$ and such that $S/R$ is isomorphic to a direct sum of copies of $F/\overline{R}$.

\end{itemize}
\end{thm}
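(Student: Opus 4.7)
The overall strategy is to combine the characterizations of bad stable domains from Theorem~\ref{ar stable thm} and Corollary~\ref{stable Noetherian cor} with the structure of injective modules over the DVR $\overline{R}$. From Proposition~\ref{ar stable char}, $\overline{R}$ is a DVR, $R\subseteq\overline{R}$ is quadratic, and $\overline{R}/R$ is a divisible $R$-module, so by Lemma~\ref{start} $\overline{R}/R$ carries a unique $\overline{R}$-module structure extending its $R$-module structure. For any $R$-submodule $S$ of $\overline{R}$ containing $R$, the quadratic property guarantees that $S$ is a ring; the explicit formula for the $\overline{R}$-action in Lemma~\ref{start}(2) shows that $S/R$ is an $\overline{R}$-submodule of $\overline{R}/R$; and $\overline{R}/S$, being torsion and divisible over the DVR $\overline{R}$, is injective. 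The Matlis decomposition argument used in the proof of Theorem~\ref{ar stable thm} then expresses $\overline{R}/S$ as a direct sum of copies of $F/\overline{R}$, and so Proposition~\ref{ar stable char} identifies $S$ as a bad stable domain whenever $S\ne\overline{R}$.

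For part~(1), with $n$ finite, the remaining task is to bound the number of summands in $\overline{R}/S\cong (F/\overline{R})^k$ by $n-1$; Corollary~\ref{stable Noetherian cor} will then identify $S$ as a bad Noetherian stable domain of embedding dimension $k+1\leq n$. The $\overline{R}$-divisibility of $\overline{R}/R$ forces $\Ext^1_{\overline{R}}(\overline{R}/\pi\overline{R},\overline{R}/R) = (\overline{R}/R)/\pi(\overline{R}/R) = 0$, so applying $\Hom(\overline{R}/\pi\overline{R},-)$ to the short exact sequence $0\to S/R\to \overline{R}/R\to \overline{R}/S\to 0$ yields the dimension identity
\[
\dim \text{Soc}(\overline{R}/S) = (n-1) - \dim \text{Soc}(S/R) + \dim (S/R)/\pi(S/R).
\]
The submodule $S/R$ of the Artinian $\overline{R}$-module $\overline{R}/R$ is itself Artinian and so decomposes as $E^a\oplus F$ with $E = F/\overline{R}$ and $F$ a module of finite length. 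Using the structure theorem for finite-length modules over the DVR $\overline{R}$, a direct calculation gives $\dim \text{Soc}(S/R) - \dim (S/R)/\pi(S/R) = a \geq 0$, whence $\dim \text{Soc}(\overline{R}/S) \leq n-1$ and $k\leq n-1$.

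For part~(2), use Theorem~\ref{ar stable thm}(2) to write $\overline{R}/R = \bigoplus_{i\in I}(B_i/R)$ with each $B_i/R\cong F/\overline{R}$; then $|I|=n-1$ if $n$ is finite and $|I|$ is infinite otherwise, so $|I|\geq k-1$ in either case. Choose $I'\subseteq I$ with $|I\setminus I'| = k-1$ and set $S = R + \sum_{i\in I'}B_i$. By the quadratic property, $S$ is a ring, and the direct sum decomposition gives $S/R \cong \bigoplus_{i\in I'}(B_i/R)$, which is a direct sum of copies of $F/\overline{R}$, while $\overline{R}/S \cong \bigoplus_{i\in I\setminus I'}(B_i/R) \cong (F/\overline{R})^{k-1}$. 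Corollary~\ref{stable Noetherian cor} then yields that $S$ is a bad Noetherian stable domain of embedding dimension $k$. The main obstacle in the proof is the socle-dimension bound in part~(1); the crucial input is the vanishing of $\Ext^1(\overline{R}/\pi\overline{R},\overline{R}/R)$, which turns the Ext long exact sequence into the alternating-dimension identity displayed above.
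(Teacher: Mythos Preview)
Your argument is correct. Part~(2) follows the paper's proof almost exactly: both choose $S$ so that $S/R$ collects all but $k-1$ of the summands $B_i/R$ in the decomposition of $\overline{R}/R$ and then invoke Corollary~\ref{stable Noetherian cor}. The paper is slightly more explicit about why the resulting isomorphism $\overline{R}/S\cong (F/\overline{R})^{k-1}$ is $S$-linear (not merely $R$-linear); your version implicitly relies on the uniqueness in Lemma~\ref{admits}(2), which forces the $\overline{R}$-structure on $\overline{R}/S$ to extend the natural $S$-structure, so that any $\overline{R}$-linear isomorphism is automatically $S$-linear. That is fine, but you should say so, and note that the ``explicit formula'' you cite lives in Lemma~\ref{admits}(2), not Lemma~\ref{start}(2).

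Part~(1) is where you genuinely diverge from the paper. The paper establishes that $S$ is a bad stable domain exactly as you do, but then bounds the embedding dimension by citing an external result of Nagasawa \cite{Naga}: for a one-dimensional local Noetherian domain, the multiplicity of any local overring is at most the multiplicity of $R$; since multiplicity and embedding dimension coincide for stable rings, the bound follows in one line. Your route is entirely internal: you push the short exact sequence $0\to S/R\to\overline{R}/R\to\overline{R}/S\to 0$ through $\Hom_{\overline{R}}(\overline{R}/\pi\overline{R},-)$, use divisibility of $\overline{R}/R$ to kill the relevant $\Ext^1$, and obtain the alternating-dimension identity. The inequality $\dim\mathrm{Soc}(S/R)\ge\dim (S/R)/\pi(S/R)$ then follows from splitting the Artinian module $S/R$ as (divisible)$\oplus$(finite length) over the DVR $\overline{R}$. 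This is a nice self-contained substitute for Nagasawa's bound; it trades a citation for a page of homological algebra, but it has the advantage of staying within the module-theoretic framework already set up in Sections~3--4. One cosmetic point: you reuse the letter $F$ for the finite-length summand of $S/R$, which collides with the quotient field; pick a different symbol.
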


 \begin{proof} (1)  
  Let $S$ be a ring with $R \subseteq S \subsetneq \overline{R}$.   Since $S \subseteq \overline{R}$ is a quadratic extension, $\overline{R}$ is a DVR and $\overline{R}/S$ is a divisible $S$-module, Proposition~\ref{ar stable char} implies that $S$ is a bad stable domain.
 Since $R$ is a one-dimensional local Noetherian domain, the multiplicity of every local overring of $R$  is bounded above by the multiplicity of $R$ \cite[Theorem 2.1]{Naga}.  Since for a stable ring, embedding dimension and multiplicity agree, statement (1) now follows.

 (2)
  Assume that $2 \leq k < n$.    Since $R \subseteq \overline{R}$ is a quadratic extension, Theorem~\ref{ar stable thm} implies there exist $R$-subalgebras $S_1,\ldots,S_{k-1},S$ of $\overline{R}$ such that $$\overline{R}/R =S_1/R \oplus \cdots \oplus S_{k-1}/R \oplus S/R,$$ where $S/R$ is isomorphic to a direct sum of copies of $F/\overline{R}$, and
 for each $i$, $S_i/R \cong F/\overline{R}$.
 We claim that $S$ is a bad Noetherian stable domain with embedding dimension $k$.
  Now  $$\overline{R}/S \cong (\overline{R}/R)/(S/R) \cong S_1/R \oplus \cdots \oplus S_{k-1}/R \cong \bigoplus_{i=1}^{k-1}F/\overline{R},$$ where these  isomorphisms are as $R$-modules.
 Thus there is an isomorphism of $R$-modules: $$\alpha:\overline{R}/S \rightarrow \bigoplus_{i=1}^{k-1}F/\overline{R}.$$  To prove that $S$ is a bad Noetherian stable domain with embedding dimension $k$, it suffices by Corollary~\ref{stable Noetherian cor} to show that $\alpha$ is an $S$-module homomorphism.
 Now $\alpha$ induces an $\overline{R}$-module structure on $\overline{R}/S$, which by Lemma~\ref{admits} is the unique $\overline{R}$-module structure extending the $R$-module structure on $\overline{R}/S$ and must be given as in the lemma.   Thus we need only verify that this $\overline{R}$-module structure extends the $S$-module structure on $\overline{R}/S$.
  Let $s \in S$ and $v \in \overline{R}$.   Then as in Lemma~\ref{admits}, $s \cdot (v + S) = rv + S$, where $s-r \in (S:_R v)\overline{R}$.  But $S \subseteq \overline{R}$ is a quadratic extension, so by Lemma~\ref{start}, $(S:_R v)\overline{R} \cap S \subseteq (S:_S v)\overline{R} \cap S = (S:_S v)$, and hence $s-r \in (S:_S v)$.  This then implies that  $sv + S = rv +S$, which proves that the $\overline{R}$-module structure on $\overline{R}/S$ extends the $S$-module structure.  Hence $S$ is a bad Noetherian stable domain of embedding dimension $k$.
  \end{proof}

\section{Representation and embedding dimension of   stable rings}





Let $A$ be a local Noetherian domain with maximal ideal ${\ff m}$, and suppose that $I$ is an ideal of $\widehat{A}$ such that every associated prime $P$ of $I$  satisfies
$A \cap P = 0$, where $A$ is identified with its image in $\widehat{A}$.  Then since $I \cap A = 0$,
 the canonical mapping $A \rightarrow \widehat{A}/I$ is an embedding, and we can identify $A$ with its image in $\widehat{A}/I$.  Under this identification, since the associated primes of $I$ contract to $0$ in $A$, it follows that the nonzero elements of $A$ are nonzerodivisors in $\widehat{A}/I$.  Therefore, the quotient field $F$ of $A$ embeds into the total quotient ring of $\widehat{A}/I$, and hence we may consider the ring $F \cap (\widehat{A}/I)$.    More precisely,
 $$F \cap (\widehat{A}/I) = \left\{ \frac{a}{b} \in F: a,b \in A, b \ne 0 {\mbox{ and }} a \in b\widehat{A} + I\right\}.$$  We show in Theorem~\ref{HRS theorem} that by choosing $I$ appropriately we obtain bad Noetherian stable domains birationally dominating $A$.

\index{Heinzer, W.}
Our theorem depends on the following lemma, due to Heinzer, Rotthaus\index{Rotthaus, C.} and Sally.  \index{Sally, J.~D.}
The result which we state  here is
weaker than what appears in \cite{HRS}, except that the last assertion of the lemma is stated there as
 $\widehat{A}/I \cong \widehat{R}$.  However, the  stronger form that we have used asserting the surjection of the canonical mapping  is justified by the proof in \cite{HRS}.

\index{Heinzer, W.} \index{Rotthaus, C.} \index{Sally, J.~D.}
\begin{lem} \label{HRS lemma} {\em (Heinzer-Rotthaus-Sally \cite[Corollary 1.27]{HRS})}
Let $A$ be a local Noetherian domain with  quotient field $F$, and
let $I$ be an ideal of $\widehat{A}$ with the property that each
associated prime $P$ of $I$ satisfies $P \cap A = 0$.
If the Krull dimension of $\widehat{A}/I$ is $1$,
then $R := F \cap (\widehat{A}/I)$ is a Noetherian domain of Krull
dimension $1$ and the canonical mapping $\phi:\widehat{A} \rightarrow \widehat{R}$ is a surjection having kernel $I$.   
\end{lem}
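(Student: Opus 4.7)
My plan is to identify $\widehat{A}/I$ explicitly with the $\mathfrak{m}$-adic completion $\widehat{R}$, and then descend Noetherianness and dimension from the former to the latter. As noted in the paragraph preceding the lemma, the hypothesis on associated primes guarantees that $A \hookrightarrow \widehat{A}/I$ and that $A \setminus \{0\}$ consists of nonzerodivisors in $\widehat{A}/I$, so $R = F \cap (\widehat{A}/I)$ is a well-defined domain with $A \subseteq R \subseteq \widehat{A}/I$. As a first observation, $R$ is quasilocal: if $x \in R$ is a unit in $\widehat{A}/I$, then $x^{-1}$ lies in both $F$ and $\widehat{A}/I$, so $x^{-1} \in R$. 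Thus the maximal ideal of $R$ is $\mathfrak{n} := R \cap \mathfrak{m}(\widehat{A}/I)$, with residue field $A/\mathfrak{m}$.

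The technical heart of the proof is a topology comparison. Since no associated prime of $I$ meets $A$ in a nonzero element, prime avoidance produces an element $a \in \mathfrak{m}$ whose image in $\widehat{A}/I$ is a nonzerodivisor. In the Noetherian local one-dimensional ring $\widehat{A}/I$ this $a$ generates an ideal primary to the maximal ideal, so $\mathfrak{m}^k \widehat{A} \subseteq a\widehat{A} + I$ for some $k$, and the $a$-adic and $\mathfrak{m}$-adic topologies on $\widehat{A}/I$ coincide. The key identity
\[
a^n R \;=\; R \cap a^n(\widehat{A}/I)
\]
falls straight out of the definition of $R$: given $r = a^n y \in R$ with $y \in \widehat{A}/I$, the quotient $y = r/a^n$ lies in $F$, hence in $F \cap (\widehat{A}/I) = R$. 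Combined with $\widehat{A}/I = A + \mathfrak{m}^n(\widehat{A}/I) \subseteq R + \mathfrak{m}^n(\widehat{A}/I)$ (from the density of $A$ in $\widehat{A}$), this shows that $R$ is dense in $\widehat{A}/I$ and that the topology induced on $R$ from the $\mathfrak{m}$-adic topology on $\widehat{A}/I$ coincides with the $\mathfrak{n}$-adic topology on $R$. Since $\widehat{A}/I$ is already complete as a quotient of the complete ring $\widehat{A}$, it is the completion $\widehat{R}$, and the canonical factorization $\widehat{A} \twoheadrightarrow \widehat{A}/I = \widehat{R}$ realizes $\phi$ as the quotient surjection with kernel $I$.

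What remains is to pass Noetherianness, and hence Krull dimension one, from $\widehat{R}$ back down to $R$. This is the step I expect to be the main obstacle. Standard faithful-flatness descent of the ACC is not directly available, since flatness of $R \to \widehat{R}$ is only guaranteed once $R$ is already known to be Noetherian, creating a circle. My plan for breaking this is to argue directly that every ideal $J$ of $R$ satisfies $J = J\widehat{R} \cap R$: pick generators $x_1,\ldots,x_m$ of the finitely generated $\widehat{R}$-ideal $J\widehat{R}$ lying in $J$, and show by iterative approximation that any $y \in J\widehat{R} \cap R$ is an $R$-combination of the $x_i$. Each approximation step uses the density $\widehat{R} = R + a^n\widehat{R}$ together with the identity $a^n R = R \cap a^n\widehat{R}$ to place the successive errors into $a^n J$, and termination is provided by Krull intersection $\bigcap_n a^n \widehat{R} = 0$ pulled back to $R$. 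The delicate point is controlling the propagation of errors so that they remain inside $J$ rather than merely inside $a^n R$. Once this equality is secured, ascending chains of ideals in $R$ extend to stabilizing chains in the Noetherian ring $\widehat{R}$, and intersecting back recovers stabilization in $R$; dimension one follows from $\dim R = \dim \widehat{R} = 1$, since completion preserves dimension for Noetherian local rings.
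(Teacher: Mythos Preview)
First, a framing remark: the paper does not prove this lemma. It is quoted as \cite[Corollary 1.27]{HRS}, with only the observation that the argument there justifies the stronger surjectivity assertion about $\phi$. So there is no in-paper proof to compare against; what follows evaluates your argument on its own.

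Your identification of $\widehat{A}/I$ with $\widehat{R}$ is correct. The identity $a^nR = R \cap a^n(\widehat{A}/I)$, combined with density, yields ring isomorphisms $R/a^nR \cong (\widehat{A}/I)/a^n(\widehat{A}/I)$ for all $n$, whence $\widehat{R}\cong\widehat{A}/I$ and $\phi$ is the quotient map. The genuine gap is in the Noetherianness descent. Your iterative approximation shows only that $y \in J_0 + a^nR$ for every $n$, where $J_0=(x_1,\dots,x_m)R$; to conclude $y\in J_0$ you would need $\bigcap_n(J_0+a^nR)=J_0$, and that is the Artin--Rees\,/\,Krull intersection statement for $R/J_0$, which presupposes exactly the Noetherianness you are trying to prove. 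The intersection $\bigcap_n a^n\widehat{R}=0$ pulls back only to the case $J_0=0$. Put differently, your approximating coefficients converge in $\widehat{R}$, not in $R$. A repair is available from what you have already set up: since $R/aR\cong\widehat{R}/a\widehat{R}$ is Noetherian, it suffices by Cohen's theorem to show that every nonzero prime $P$ of $R$ contains $a$. Clearing a denominator gives $0\neq p\in P\cap A$; then $p$ is a nonzerodivisor in $\widehat{R}$, the same $F$-argument yields $p\widehat{R}\cap R=pR$, and $\dim\widehat{R}=1$ forces $a^N\in p\widehat{R}\cap R=pR\subseteq P$, so $a\in P$. Hence every prime of $R$ is finitely generated, $R$ is Noetherian, and $\dim R=\dim\widehat{R}=1$.
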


The following  lemma is an application of a standard fact about finite generation of modules over complete local rings.

\index{tightly dominates}
\begin{lem} \label{finite Noetherian} Let $A$ be a local Noetherian domain with maximal ideal ${\ff m}$.    If $R$ is a quasilocal  domain of Krull dimension $1$ that finitely dominates $A$, then $R$ is a Noetherian domain. If also $R$ tightly dominates $A$, then
 the canonical homomorphism  $\widehat{A} \rightarrow \widehat{R}$ is a surjection, where $\widehat{R}$ is the completion of $\widehat{R}$ in the $\ff m$-adic topology.
\end{lem}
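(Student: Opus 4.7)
The plan is to handle the two assertions separately: first, deduce that $R$ is Noetherian by showing that the maximal ideal of $R$ is finitely generated and invoking Cohen's theorem; second, produce the surjection $\widehat{A} \to \widehat{R}$ by an inverse-limit argument built on the fact that tight domination makes $R/\ff m^nR$ a cyclic $A/\ff m^n$-module for every $n$.

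For the first assertion, let $M$ denote the maximal ideal of $R$.  I would first observe that $R/\ff mR$, being a finitely generated module over the Noetherian ring $A$, is a Noetherian $A$-module, so its $A$-submodule $M/\ff mR$ is finitely generated over $A$, and therefore over $R$.  Combining this with the generation of $\ff mR$ over $R$ by any finite generating set for $\ff m$, I conclude that $M$ is finitely generated as an ideal of $R$.  Since $R$ is a one-dimensional quasilocal domain, its only prime ideals are $(0)$ and $M$, both finitely generated, and Cohen's theorem then gives that $R$ is Noetherian.

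For the second assertion, assume further that $R = A + \ff mR$.  A straightforward induction on $n$ gives $R = A + \ff m^nR$ for every $n \geq 1$, and hence the canonical map $A/\ff m^n \to R/\ff m^nR$ is surjective for each such $n$.  Next, since $R$ is a one-dimensional local Noetherian ring and $R/\ff mR$ is a finite-dimensional $A/\ff m$-algebra, hence an Artinian ring, there exists $k$ with $M^k \subseteq \ff mR$; together with the obvious $\ff mR \subseteq M$, this shows that the $M$-adic and $\ff mR$-adic topologies on $R$ coincide, so that $\widehat{R} = \varprojlim R/\ff m^nR$.

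To finish, I would take the inverse limit of the short exact sequences
$$0 \to K_n \to A/\ff m^n \to R/\ff m^nR \to 0,$$
where $K_n := (A \cap \ff m^nR)/\ff m^n$.  Each $K_n$ is a submodule of the finite length module $A/\ff m^n$, so the inverse system $\{K_n\}$ automatically satisfies the Mittag--Leffler condition and $\varprojlim^1 K_n = 0$.  The resulting exact sequence $0 \to \varprojlim K_n \to \widehat{A} \to \widehat{R} \to 0$ produces the desired surjection.  The step I expect to require the most care is the identification of topologies, guaranteeing that $\widehat{R}$ in the statement really agrees with $\varprojlim R/\ff m^nR$; once that is in hand, the Mittag--Leffler step is essentially automatic.
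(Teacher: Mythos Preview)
Your proof is correct, and it takes a genuinely different route from the paper's.  For the first assertion, the paper works through the completion: it shows $\widehat{R}$ is a finitely generated $\widehat{A}$-module via the standard ``complete plus finite modulo the maximal ideal implies finite'' result (Matsumura, Theorem~8.4), hence $\widehat{R}$ is Noetherian, and then transfers the finite generation of $M\widehat{R}$ back to $M$ using $\widehat{R}/r\widehat{R} \cong R/rR$ for nonzero $r \in M$.  Your argument is more direct and more elementary: the Noetherian $A$-module $R/{\ff m}R$ immediately gives $M/{\ff m}R$ finitely generated, and Cohen's theorem finishes.  For the second assertion, the paper reuses its finiteness of $\widehat{R}$ over $\widehat{A}$ and observes that under tight domination the generating set can be taken to be $\{1\}$; you instead pass directly to the inverse limit of the level-wise surjections $A/{\ff m}^n \to R/{\ff m}^nR$ and invoke Mittag--Leffler.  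Your approach avoids the appeal to Matsumura's Theorem~8.4 altogether, while the paper's yields the extra information that $\widehat{R}$ is a finite $\widehat{A}$-module even in the finite-domination case.  One small remark: since the statement explicitly defines $\widehat{R}$ as the ${\ff m}$-adic completion, your identification $\widehat{R} = \varprojlim R/{\ff m}^n R$ is definitional; the comparison with the $M$-adic topology is a useful observation (and matches how $\widehat{R}$ is used elsewhere in the paper) but is not strictly needed here.
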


\begin{proof}  Note  that ${\ff m}\widehat{A} \cdot \widehat{R} = {\ff m}\widehat{R}$, and that 
 $\widehat{R}$ is separated in the ${\ff m}\widehat{A}$-adic topology.
Now  since $R/{\ff m}R \cong \widehat{R}/{\ff m}\widehat{R}$ and $R/{\ff m}R$ is finite over $A$, there exist elements $\omega_1,\ldots, \omega_n \in \widehat{R}$ such that $ \widehat{R}/{\ff m}\widehat{R}$ is generated as an $\widehat{A}$-module by the images of these elements.    Thus since   $\widehat{A}$ is complete in the ${\ff m}\widehat{A}$-adic topology and the $\widehat{A}$-module $\widehat{R}$
 is complete and separated in the ${\ff m}\widehat{A}$-adic topology, $\widehat{R}$ is generated as an $\widehat{A}$-module by $\omega_1,\ldots,\omega_n$ \cite[Theorem 8.4, p.~58]{Ma}.  Hence since $\widehat{A}$ is a Noetherian ring, so is the finite $\widehat{A}$-module $\widehat{R}$.
  In particular, if $M$ denotes the maximal ideal of $R$, then  $M\widehat{R}$ is a finitely generated ideal of $\widehat{R}$.  Let $0 \ne r \in M$. Then since $R$ has Krull dimension $1$, there exists $i>0$ such that ${\ff m}^i \subseteq rR$.  Therefore, since $\widehat{R}/{\ff m}^i\widehat{R}$ is isomorphic as an $R$-algebra to $R/{\ff m}^iR$, it follows that $\widehat{R}/r\widehat{R}$ and $R/rR$ are isomorphic as $R$-algebras.  Consequently, since the $R$-module $M\widehat{R}/r\widehat{R}$ is finitely generated, so is the $R$-module $M/rR$.  Therefore,  
  $M$ is a finitely generated ideal of $R$, and  since $R$ is quasilocal of Krull dimension $1$, $R$ is a Noetherian domain.

 Finally, suppose that $R = A + {\ff m}R$, and let $\phi:\widehat{A} \rightarrow \widehat{R}$ denote the canonical map.  We claim
that   $\widehat{R}= \phi(\widehat{A}) + {\ff m}\widehat{R}$.  For suppose that $\l r_i + {\ff m}^iR\r \in \widehat{R}$. (We are viewing $\widehat{R}$ here as a subring of $\prod_i R/{\ff m}^iR$.) Then for each $i>0$, there exists ${m}_i \in {\ff m}R$ such that $r_i = r_1 + m_i$.  Since $R = A + {\ff m}R$, there exist $b \in A$ and $m \in {\ff m}R$ such that $r_1 = b +m$.  Therefore, $$\l r_i + {\ff m}^iR \r = \l r_1 + m_i + {\ff m}^iR\r = \l b + (m+m_i) + {\ff m}^iR \r \in \phi(\widehat{A}) + {\ff m}\widehat{R},$$ and hence $\widehat{R} = \phi(\widehat{A}) + {\ff m}\widehat{R}$.
 So
  if we reconsider the  elements $\omega_1,\ldots, \omega_n$ whose images generate $\widehat{R}/{\ff m}\widehat{R}$ as an $\widehat{A}$-module, we see that we may assume that $n =1$ and $\omega_1 = 1$.  Thus, appealing again to Theorem 8.4 of  \cite{Ma}, we conclude that $\widehat{R}$ is generated as an $\widehat{A}$-module by $\omega_1 = 1$. Consequently, $\widehat{R} =\phi( \widehat{A})$, so that $\phi$ is a surjection.
\end{proof}

We apply the lemmas in the next theorem, which asserts under certain conditions (to be clarified later in Corollary~\ref{generic 2} and Theorem~\ref{generic}), the existence of bad stable rings tightly dominating a  
given local Noetherian domain.  The theorem gives also a representation of all such stable rings.

\begin{thm} \label{HRS theorem} \label{pre excellent} Let   $A$ be a local Noetherian domain, not a DVR, that  is tightly dominated by a DVR $V$.     Let $F$ denote the quotient field of $A$, and let
 $P$ be the kernel
of the canonical homomorphism $\widehat{A} \rightarrow \widehat{V}$. Then $P \cap A = 0$ and 
  the following statements are equivalent for any ring $R$ properly between $A$ and $V$. 
 \begin{itemize} 
 \item[(1)]  $R$  is a
 bad  stable ring (necessarily Noetherian, by Lemma~\ref{finite Noetherian})  tightly dominating $A$.
 
 \item[(2)]   $R = F \cap (\widehat{A}/J)$ for some  $P$-primary  ideal $J$ of $\widehat{A}$ containing $P^2$.

\end{itemize}
Moreover, every bad stable domain between $A$ and $V$ tightly dominating $A$ contains the bad Noetherian stable domain  $R = F \cap (\widehat{A}/P^{(2)})$, where $P^{(2)}$ is the second symbolic power of $P$.   
\end{thm}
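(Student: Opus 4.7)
The plan is to handle the equivalence $(1) \Leftrightarrow (2)$ first, then derive the ``moreover'' clause. As preliminaries, Lemma~\ref{finite Noetherian} applied to the tightly dominating DVR $V$ shows $\widehat{A} \to \widehat{V}$ is surjective; since $\widehat{V}$ is a DVR, $P$ is a prime ideal of $\widehat{A}$, and the injective composition $A \hookrightarrow V \hookrightarrow \widehat{V}$ factors through $\widehat{A}$, giving $P \cap A = 0$.

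For $(2) \Rightarrow (1)$, I start with $R$ properly between $A$ and $V$ of the form $F \cap (\widehat{A}/J)$, where $J$ is $P$-primary with $P^2 \subseteq J$. The requirement $R \ne V$ forces $J \subsetneq P$, and Lemma~\ref{HRS lemma} then supplies that $R$ is a one-dimensional Noetherian domain with $\widehat{R} \cong \widehat{A}/J$. Inside $\widehat{R}$, the image $P/J$ is a nonzero prime, $(P/J)^2 = 0$, and $\widehat{R}/(P/J) \cong \widehat{A}/P \cong \widehat{V}$ is a DVR, so Corollary~\ref{Noetherian completion} identifies $R$ as a bad Noetherian stable domain. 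For tight domination, I would compute $\widehat{R}/{\ff m}\widehat{R} \cong \widehat{A}/({\ff m}\widehat{A}+J) = \widehat{A}/{\ff m}\widehat{A} \cong A/{\ff m}$ (using $J \subseteq P \subseteq {\ff m}\widehat{A}$); since ${\ff m}R$ is $M$-primary in $R$, this gives $R/{\ff m}R = A/{\ff m}$ and hence $R = A + {\ff m}R$.

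For $(1) \Rightarrow (2)$, Lemma~\ref{finite Noetherian} provides a surjection $\widehat{A} \twoheadrightarrow \widehat{R}$; let $J$ be its kernel, so $\widehat{R} \cong \widehat{A}/J$. Corollary~\ref{Noetherian completion} supplies a nonzero prime $Q \subseteq \widehat{R}$ with $Q^2 = 0$ and $\widehat{R}/Q$ a DVR; such $Q$ must be the nilradical of $\widehat{R}$ (it is a prime consisting of nilpotents, with domain quotient). The surjection $\widehat{R}/Q \twoheadrightarrow \widehat{V}$ induced by $R \subseteq V$ is a surjection between DVRs, hence an isomorphism, so the preimage of $Q$ in $\widehat{A}$ equals $P$; this yields $J \subseteq P$ and $P^2 \subseteq J$. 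The delicate point is showing $J$ is $P$-primary: since $R$ is a one-dimensional local Noetherian domain, $\depth(R) = 1$, and depth is preserved under completion, so $M\widehat{R} \notin \mathrm{Ass}(\widehat{R})$; combined with $Q$ being the unique minimal prime of $\widehat{R}$, this gives $\mathrm{Ass}(\widehat{R}) = \{Q\}$, whence $J$ is $P$-primary. Finally, $R = F \cap (\widehat{A}/J)$ follows from faithful flatness of $\widehat{R}$ over $R$: $b\widehat{R} \cap R = bR$ for every nonzero $b \in R$.

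For the ``moreover'' clause, the observation is that $P^{(2)}$ is the smallest $P$-primary ideal containing $P^2$: if $J \supseteq P^2$ is $P$-primary and $x \in P^{(2)}$ satisfies $sx \in P^2 \subseteq J$ with $s \notin P = \sqrt{J}$, then $x \in J$. By $(2) \Rightarrow (1)$, $R_0 := F \cap (\widehat{A}/P^{(2)})$ is then a bad Noetherian stable domain; for any bad stable $R$ between $A$ and $V$ tightly dominating $A$, $(1) \Rightarrow (2)$ realizes $R = F \cap (\widehat{A}/J)$ with $P^{(2)} \subseteq J$, and the explicit description of $F \cap (\widehat{A}/I)$ as $\{a/b \in F : a - b\alpha \in I \text{ for some } \alpha \in \widehat{A}\}$ immediately gives $R_0 \subseteq R$. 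The main obstacle I anticipate throughout the proof is the depth-and-associated-prime argument establishing $P$-primariness of $J$ in $(1) \Rightarrow (2)$.
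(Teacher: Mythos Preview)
Your proof is correct and follows the same overall architecture as the paper's: both directions rest on Lemma~\ref{finite Noetherian} for the surjections $\widehat{A}\twoheadrightarrow\widehat{R}$ and $\widehat{A}\twoheadrightarrow\widehat{V}$, on Lemma~\ref{HRS lemma} for the identification $\widehat{R}\cong\widehat{A}/J$, and on Corollary~\ref{Noetherian completion} for the bad-stable criterion; the ``moreover'' clause is handled identically via $P^{(2)}\subseteq J$.

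Two sub-steps in $(1)\Rightarrow(2)$ are organized differently. For matching the prime $Q$ of Corollary~\ref{Noetherian completion} with $P/J$, the paper first sets $Q=P/J$, then separately obtains a prime $Q'$ with $(Q')^2=0$ from Theorem~\ref{complete char}, and proves $Q=Q'$ by showing $F\cap(\widehat{R}/Q')=V$ (using that the normalization of $R$ is $V$) and invoking Lemma~\ref{HRS lemma}; your route---recognize $Q$ as the nilradical and pass through the surjection $\widehat{R}/Q\twoheadrightarrow\widehat{V}$ of DVRs---is more direct (the surjectivity you use follows at once since $\widehat{A}\to\widehat{V}$ factors through $\widehat{A}\twoheadrightarrow\widehat{R}$). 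For the $P$-primariness of $J$, the paper argues that $\widehat{R}$ is $A$-torsion-free (citing Matlis' theory of the ideal-topology completion), so $(J:_{\widehat{A}}b)=J$ for every nonzero $b\in A$, which rules out ${\ff m}\widehat{A}$ as an associated prime; your depth argument ($\depth R=1$ is preserved under completion, so ${\ff m}\widehat{R}\notin\mathrm{Ass}(\widehat{R})$) is an equivalent but more standard formulation. In each case your variant buys a slightly shorter, more self-contained argument, at no cost.
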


\begin{proof} It is clear that $P \cap A = 0$, since $V$ is a DVR dominating $A$.  
First suppose that $R$ is a bad stable ring properly between $A$ and $V$ that tightly dominates $A$.  Then $R$ has maximal ideal ${\ff m}R$, where ${\ff m}$ is the maximal ideal of $A$.  
%
%
%
Since $R$ tightly dominates $A$, we have  by Lemma~\ref{finite Noetherian} that  $R$ is a Noetherian domain and the canonical mapping $\widehat{A} \rightarrow \widehat{R}$ is a surjection.  Let $J$ denote the kernel of this mapping, so that $\widehat{R} \cong \widehat{A}/J$ and  $J \subseteq P$.  Let $Q$ be the prime ideal of $\widehat{R}$ corresponding to $P/J$.
By Lemma~\ref{finite Noetherian}, the canonical mapping $\widehat{A} \rightarrow \widehat{V}$ is a surjection, so that $\widehat{A}/P \cong \widehat{V}$, and hence $\widehat{A}/P \cong \widehat{R}/Q$ is a DVR.
Now since $R$ has Krull dimension $1$, the ${\ff m}$-adic and ideal topologies on $R$ are the same, so  $\widehat{R}$ can be viewed as the completion of $R$ in the ideal topology.  Thus by Theorem~\ref{complete char},
there is a prime ideal $Q'$ of $\widehat{R}$ such that $(Q')^2 = 0$ and $\widehat{R}/Q'$ is a DVR. Since then $F \cap \widehat{R}/Q'$ is another DVR containing $R$ and the normalization of $R$ is the DVR  $V$, this forces $V = F \cap \widehat{R}/Q'$. Consequently, by Lemma~\ref{HRS lemma}, 
$Q = Q'$, and hence $Q^2 = 0$, which implies   $P^2 \subseteq J \subseteq P$.  

To see next that $J$ is $P$-primary, note that 
 since the ${\ff m}$-adic and ideal topologies agree on $R$,  then
  $\widehat{R} \cong \widehat{A}/J$ is a torsion-free $A$-module \cite[Theorem 2.1, p.~11]{M1}.  Thus for all $0 \ne b \in A$, $(J:_{\widehat{A}}b) = J$.
We claim that the latter property  implies that   
  $J$ is $P$-primary. Indeed, since $P^2 \subseteq J$ and $\widehat{A}/J$ has Krull dimension $1$, the only prime ideals of $\widehat{A}$ containing $J$ are $P$ and the maximal ideal ${\ff m}\widehat{A}$ of $\widehat{A}$.   Thus to show that $J$ is $P$-primary it suffices to observe that
${\ff m}\widehat{A}$ is not an associated prime of $J$.  For if $x \in \widehat{A}$ with ${\ff m}\widehat{A} = (J:_{\widehat{A}} x)$, then choosing $0 \ne a \in {\ff m}$, we have $x \in (J:_{\widehat{A}}a) = J$, contrary to the choice of $x$.  
Therefore,  
the only associated prime of $J$ is $P$, and hence $J$ is $P$-primary.  
  Finally,   
 to finish the verification of (2), we need only observe that $R = F \cap (\widehat{A}/J)$.
Since $J$ is $P$-primary and $P \cap A = 0$, then as discussed at the beginning of the section, $F$ can be identified with a subring of the total quotient ring of $\widehat{A}/J$. Moreover,  since $\widehat{R} \cong \widehat{A}/J$, then  $\widehat{A}/J$ is faithfully flat over $R$, from which (2) now follows.   
%
%
%

Conversely, suppose  that $J$ is a $P$-primary ideal of $\widehat{A}$ such that $P^2 \subseteq J$ and $R = F \cap (\widehat{A}/J)$.   We prove that $R $ is a bad Noetherian stable domain tightly dominating $A$.
Since $P \cap A = 0$, we may appeal to Lemma~\ref{HRS lemma} to obtain that  the ring $R=F \cap (\widehat{A}/J)$ is a Noetherian domain of Krull dimension $1$ and the canonical $A$-algebra homomorphism $\widehat{A} \rightarrow \widehat{R}$ is surjective with kernel $I$.  Thus since ${\ff m}\widehat{A}$ is  the maximal ideal of $\widehat{A}$, the maximal ideal of  $\widehat{R}$ is ${\ff m}\widehat{R}$.
  Also, since $R$ has Krull dimension $1$ and ${\ff m}$ is a finitely generated ideal of $A$, the ${\ff m}$-adic and ideal topologies agree on $R$, so by Theorem~\ref{complete char}, $R$ is a bad stable domain, since $\widehat{A}/P$ is a DVR and $(P/J)^2 = 0$.
To see that ${\ff m}R$ is the maximal ideal of $R$, we   view $R$ as a subring of  $\widehat{R}$.  
 Then since ${\ff m}\widehat{R}$ is the maximal ideal of $\widehat{R}$ and $\widehat{R}/{\ff m}\widehat{R}$ and $R/{\ff m}R$ are isomorphic as  rings, it follows that 
 ${\ff m}R$ is the maximal ideal of $R$.  In summary, $R = F \cap (\widehat{A}/J)$ is a bad Noetherian stable domain with maximal ideal ${\ff m}R$.  Thus since $V = A + {\ff m}V$ and ${\ff m}R$ is the maximal ideal of $R$, it follows that $R = A + {\ff m}R$, so that $R$ tightly dominates $A$. Thus (2) implies (1). 
 
Next we observe  that the ring $R =F \cap (\widehat{A}/P^{(2)})$ is contained in each
bad stable ring between $A$ and $V$ that tightly
dominates $A$.  For let $R'$ be such a ring.  Then by the equivalence of (1) and (2), there exists a $P$-primary ideal $J$ of $\widehat{A}$ such that
 $P^2 \subseteq J$ and $R' = F \cap (\widehat{A}/J).$ Since $P^{(2)} \subseteq J$, it follows that $R \subseteq R'$. 
 

Finally we show that the embedding dimension of $R=F \cap (\widehat{A}/P^{(2)})$ is $1$ more than the embedding dimension of $\widehat{A}_P$.  Since $P^{(2)}$ is a $P$-primary ideal, it follows that $P/P^{(2)}$ is a torsion-free $\widehat{A}/P$-module.
   Therefore, $P/P^{(2)}$ is a   
finitely generated free module over the DVR $\widehat{A}/P$.  The rank of this free module is the dimension of the $(\widehat{A}_P/P_P)$-vector space $P_P/P^{(2)}_P = P_P/P^2_P$, and the dimension of this vector space is in turn the embedding dimension, $n$, of $\widehat{A}_P$.  Thus the ideal $P/P^{(2)}$ of $\widehat{A}/P^{(2)}$ is minimally generated by $n$ elements.  Passing now from $\widehat{A}/P^{(2)}$  to the isomorphic ring 
 $\widehat{R}$ (Lemma~\ref{HRS lemma}), there exists a prime $Q$ of $\widehat{R}$ such that $Q^2 = 0$, $Q$ is minimally generated by $n$ elements, and ${\widehat{R}}/Q$ is a DVR.  Let $t$ be in the maximal ideal $M$ of $\widehat{R}$ such that $M = t{\widehat{R}} + Q$.  We claim that $(t\widehat{R} + M^2) \cap (Q + M^2) = M^2$.  Let $x \in \widehat{R}$, $m_1,m_2 \in M^2$ and $ q \in Q$ such that $xt + m_1 = q+m_2$.  If $x \in M$, then $xt \in M^2$, so that $xt+m_1 \in M^2$ as claimed.  Otherwise, $x$ is a unit in $\widehat{R}$, and hence $t \in Q+M^2$.  But then $M = t\widehat{R} + Q \subseteq Q + M^2$, which is impossible since $\widehat{R}/Q$ is a DVR with maximal ideal $M/Q$.  Therefore, $(t\widehat{R} + M^2) \cap (Q + M^2) = M^2$, and hence
  $$M/M^2 = (t\widehat{R}+{M}^2)/{M}^2 \oplus (Q+{M}^2)/{M}^2.$$  Moreover, by Lemma~\ref{stable null}, there exists $x \in {M}$ such that $M^2 = xM$.  Therefore, $(Q + {M}^2)/{M}^2 \cong Q/(Q \cap xM) = Q/xQ$.  However, $xQ \subseteq {M}Q \subseteq {M}^2 \cap Q = x{M} \cap Q = xQ$, so $xQ = {M}Q$, and hence 
 $(Q + {M}^2)/{M}^2 \cong Q/MQ$.  Thus  
  the dimension of the $\widehat{R}/M$-vector space $(Q + {M}^2)/{M}^2$ is $n$, which using Nakyama's Lemma proves that $M$ is minimally generated by $n+1$ elements.  Since the embedding dimension of $R$ and $\widehat{R}$ agree, the proof is complete.     
\end{proof}

\begin{rem} {\em Theorem~\ref{HRS theorem} does not account for all of the bad stable domains between $A$ and $V$.  For example, if we let $R$ be as in (2), then the ring $R_1=\End({\ff m}R)$ is a stable domain between $R$ and $V$ (Theorem~\ref{convex}), but its maximal ideal is not extended from ${\ff m}$, a necessary condition in order for the ring to fall into the classification in the theorem.  Indeed, ${\ff m}R_1 = {\ff m}R$, so that if ${\ff m}R_1$ is the maximal ideal of $R_1$, then since it is stable, it is necessarily principal in $R_1$.  But then $R_1$ is a DVR, and hence $R_1 = V$, which forces $V$ to be a fractional ideal of $R$.  This is impossible since $V/R$ is a divisible $R$-module (divisibility follows from the fact that $V$ is a DVR with $V = R +{\ff m}V$). Therefore, the ring $R_1$ cannot be described as in (1) of the theorem.  This argument shows that in general a bad stable domain that tightly dominates a quasilocal domain $A$ will have stable overrings that do not tightly dominate $A$.
}
\end{rem}


\begin{cor} \label{generic 2}  The following statements are equivalent for a local Noetherian
domain $A$ that is not a DVR.

\begin{itemize}

\item[{(1)}] There is a prime ideal $P$ of $\widehat{A}$ such
that $P \cap A = 0$ and $\widehat{A}/P$ is a DVR.

\index{DVR!tightly dominating}
\item[{(2)}]  $A$ is tightly dominated by a DVR.

\index{stable domain!tightly dominating}
\item[{(3)}]  $A$ is tightly dominated by a bad Noetherian stable
domain.

\index{$2$-generator ring!tightly dominating}
\item[(4)]  $A$ is tightly dominated by
a bad $2$-generator  domain.

\end{itemize}

\end{cor}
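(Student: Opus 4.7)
The plan is to establish $(1) \Leftrightarrow (2)$ directly via Lemma~\ref{HRS lemma} and its converse Lemma~\ref{finite Noetherian}, and then to close the cycle $(2) \Rightarrow (3) \Rightarrow (4) \Rightarrow (2)$ using Theorem~\ref{pre excellent}, Theorem~\ref{convex}, and the structural fact from Section~\ref{(S)} that the normalization of a bad stable domain is a DVR tightly dominating it.

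For $(1) \Rightarrow (2)$, given a prime $P$ of $\widehat{A}$ with $P \cap A = 0$ and $\widehat{A}/P$ a DVR, I would set $V := F \cap (\widehat{A}/P)$. Applying Lemma~\ref{HRS lemma} with $I = P$ (whose only associated prime is $P$ itself) gives that $V$ is a one-dimensional Noetherian domain with $\widehat{V} \cong \widehat{A}/P$, forcing $V$ to be a DVR. Since the residue field of $\widehat{A}/P$ coincides with that of $A$, the same holds for $V$, and this yields $V = A + {\ff m}V$. Conversely, for $(2) \Rightarrow (1)$, given a DVR $V$ tightly dominating $A$, Lemma~\ref{finite Noetherian} gives that $\widehat{A} \to \widehat{V}$ is a surjection; its kernel $P$ is prime (since $\widehat{V}$ is a domain), satisfies $P \cap A = 0$ by injectivity of $A \hookrightarrow V \hookrightarrow \widehat{V}$, and has $\widehat{A}/P \cong \widehat{V}$ a DVR.

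For $(2) \Rightarrow (3)$ I would invoke Theorem~\ref{pre excellent}: with $V$ the given DVR and $P := \ker(\widehat{A} \to \widehat{V})$, the theorem produces $R := F \cap (\widehat{A}/P^{(2)})$ as a bad Noetherian stable domain tightly dominating $A$. For $(3) \Rightarrow (4)$ I would start with a bad Noetherian stable domain $R$ of embedding dimension $n \geq 2$ tightly dominating $A$ and apply Theorem~\ref{convex}(2) with $k = 2$ to produce a bad Noetherian stable domain $S$ of embedding dimension $2$ with $R \subseteq S \subseteq \overline{R}$. Since a local Noetherian stable ring of embedding dimension $2$ has multiplicity $2$ and hence is a $2$-generator ring (by the classical theory of one-dimensional stable local rings), and since $\overline{S} = \overline{R}$ fails to be finite over $S$ because $\overline{R}/S \cong F/\overline{R}$ is a nonzero divisible $S$-module, it follows that $S$ is a bad $2$-generator domain. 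To verify that $S$ tightly dominates $A$, I would first note that $\overline{R}$ tightly dominates both $R$ (Section~\ref{(S)}) and $A$ (by composition), so the maximal ideal of $\overline{R}$ equals ${\ff m}\overline{R}$; then, using $\overline{R} = S + {\ff m}\overline{R}$ (which follows from $\overline{R}$ tightly dominating $S$, itself a consequence of $\overline{R}$ tightly dominating $R \subseteq S$ together with a residue-field argument), I would deduce that the maximal ideal of $S$ equals ${\ff m}S$, whence $S = A + {\ff m}S$.

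For $(4) \Rightarrow (2)$, a bad $2$-generator domain $R$ tightly dominating $A$ is in particular bad stable, so Proposition~\ref{ar stable char} gives that $\overline{R}$ is a DVR tightly dominating $R$; composing tight dominations then yields that $\overline{R}$ is a DVR tightly dominating $A$. The main obstacle I foresee is the verification that the intermediate ring $S$ in $(3) \Rightarrow (4)$ tightly dominates $A$: tight domination is not automatic for rings between $R$ and $\overline{R}$ and will require the specific structure $\overline{R}/R \cong \bigoplus F/\overline{R}$ from Theorem~\ref{ar stable thm} (which induces the corresponding decomposition of $\overline{R}/S$ provided by Theorem~\ref{convex}), together with careful tracking of maximal-ideal identities through the tower $A \subseteq R \subseteq S \subseteq \overline{R}$ to collapse the inductive iterates $s \in {\ff m}S + S \cap {\ff m}^n \overline{R}$ down to $s \in {\ff m}S$.
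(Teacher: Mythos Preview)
Your proposal is correct and follows the same cycle of implications as the paper, invoking the same key results (Lemma~\ref{HRS lemma}, Lemma~\ref{finite Noetherian}, Theorem~\ref{pre excellent}, Theorem~\ref{convex}, Proposition~\ref{ar stable char}) at the same places.  The one point where you diverge is the verification of tight domination in $(3) \Rightarrow (4)$, and here you are making the argument harder than it needs to be.

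You route the tight domination of $S$ over $A$ through the quotient $\overline{R}/S$, and this is what produces the ``obstacle'' you flag at the end: passing from $\overline{R} = S + {\ff m}\overline{R}$ down to $S = A + {\ff m}S$ requires controlling $S \cap {\ff m}^n\overline{R}$, which indeed demands an Artin--Rees style argument.  But Theorem~\ref{convex}(2) hands you the \emph{other} quotient: it guarantees that $S/R$ is isomorphic to a direct sum of copies of $F/\overline{R}$, hence is a divisible $R$-module.  Divisibility of $S/R$ gives $S = R + rS$ for every nonzero $r \in R$, so in particular $S = R + {\ff m}S$; since $R = A + {\ff m}R \subseteq A + {\ff m}S$, this yields $S = A + {\ff m}S$ in one line.  (Incidentally, the same trick shows that for \emph{any} quasilocal $R$ tightly dominating $A$ the maximal ideal of $R$ is ${\ff m}R$: write $x \in M_R \subseteq R = A + {\ff m}R$ as $x = a + y$ with $y \in {\ff m}R$, note $a = x - y \in M_R \cap A = {\ff m}$, hence $x \in {\ff m}R$.)  This is exactly the paper's argument, and it dissolves the obstacle entirely.
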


\begin{proof}
(1) $\Rightarrow$ (2)  Let $V = F \cap (\widehat{A}/P)$, where $F$ denotes the quotient field of $A$.   Then since $\widehat{A}/P$ is a DVR with maximal ideal generated by the image of the maximal ideal ${\ff m}$ of $A$, and since the  residue field of $\widehat{A}/P$ is  $A/{\ff m}$, it follows that $V$ is a DVR with maximal ideal ${\ff m}V$ and residue field $A/{\ff m}$.  Therefore,   $V$ tightly dominates $A$.

(2) $\Rightarrow$ (3) By Theorem~\ref{HRS theorem}, $R = F \cap  (\widehat{A}/P^{(2)})$ is  a bad Noetherian stable domain that tightly dominates $A$.

(3) $\Rightarrow$ (4) Let $R$ be a bad Noetherian stable domain that tightly dominates $A$.   Apply Theorem~\ref{convex} to the ring $R$ in (3) to obtain a bad stable domain $T$ of embedding dimension $2$ such that $T/R$ is a divisible $R$-module.  Since the multiplicity and embedding dimension of a local stable ring agree, it follows that $T$ is a $2$-generator ring.
  Moreover,  the divisibility of $T/R$ implies that $T = R +{\ff m}T$.  Thus since $R = A + {\ff m}R$, we see that $T = A + {\ff m}T$, and hence $T$ tightly dominates $A$.

(4) $\Rightarrow$ (2) Let $R$ be a bad $2$-generator domain that tightly dominates $A$.  By Proposition~\ref{ar stable char}, $\overline{R}$ is a DVR and $\overline{R}/R$ is a divisible $R$-module.  Hence $\overline{R} = R + {\ff m}\overline{R} = A + {\ff m}\overline{R}$, and $\overline{R}$ tightly dominates $A$.

(2) $\Rightarrow$ (1)  Suppose $V$ is a DVR that tightly dominates $A$.    Since $V = A + {\ff m}V$, we have by Lemma~\ref{finite Noetherian} that the canonical mapping $\phi:\widehat{A} \rightarrow \widehat{V}$ is surjective.  Since $V$ is a DVR, so is $\widehat{V}$, and hence the kernel $P$ of this mapping is a prime ideal of $\widehat{A}$.  If $b \in P \cap A$, then  $b \in {\ff m}^iV$ for all $i>0$, which since $V$ is a DVR forces $b = 0$.  Therefore, $P \cap A = 0$.
\end{proof}

From Theorem~\ref{HRS theorem} we deduce a bound on the embedding dimension of bad stable rings dominating an excellent local Noetherian domain:

\begin{cor} \label{excellent} Let $A$ be an excellent local Noetherian domain.  If $A$ has   dimension $d>1$ and the generic formal fiber of $A$ has dimension $d-1$, then there exists a bad Noetherian stable ring finitely dominating $A$ and having embedding dimension $d$, and every bad stable ring finitely dominating $A$  has embedding dimension at most $d$.     
\end{cor}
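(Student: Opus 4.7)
My plan for Corollary~\ref{excellent} is to separate it into an existence statement and a uniform upper bound, routing the excellence hypothesis through Theorem~\ref{pre excellent} in both.

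\emph{Existence.} Because $A$ is excellent it is a $G$-ring, so every formal fiber of $A$ is geometrically regular; in particular, $\widehat{A}_P$ is a regular local ring for every prime $P$ of $\widehat{A}$ with $P\cap A=0$. The hypothesis on the generic formal fiber supplies such a $P$ with $\dim\widehat{A}_P=d-1$, and catenarity of $\widehat{A}$ together with $P$ not being maximal forces $\dim\widehat{A}/P=1$. Since Theorem~\ref{pre excellent} requires $\widehat{A}/P$ itself to be a DVR, I would arrange this by using the freedom of choice of $P$ among height-$(d-1)$ primes in the generic formal fiber: excellence guarantees the existence of a $P$ generated by part of a regular system of parameters of $\widehat{A}$ modulo a suitable minimal prime, so that $\widehat{A}/P$ is one-dimensional regular. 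For such $P$, Corollary~\ref{generic 2} makes $A$ tightly dominated by the DVR $V=F\cap(\widehat{A}/P)$, and Theorem~\ref{pre excellent} then produces the bad Noetherian stable ring $R=F\cap(\widehat{A}/P^{(2)})$ tightly (hence finitely) dominating $A$ with
\[
\embdim R \;=\; 1+\embdim\widehat{A}_P \;=\; 1+(d-1) \;=\; d,
\]
the middle equality being the content of the last paragraph of Theorem~\ref{pre excellent}, and the last equality holding because $\widehat{A}_P$ is a regular local ring of dimension $d-1$.

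\emph{Upper bound.} Let $R$ be any bad stable ring finitely dominating $A$. By Lemma~\ref{finite Noetherian} $R$ is Noetherian, and by Corollary~\ref{Noetherian completion} there is a prime $Q$ of $\widehat{R}$ with $Q^2=0$ and $\widehat{R}/Q$ a DVR. Let $\phi:\widehat{A}\to\widehat{R}$ be the canonical map (via which $\widehat{R}$ is a finite $\widehat{A}$-module, by Matsumura's Theorem~8.4 as in Lemma~\ref{finite Noetherian}), and set $P=\phi^{-1}(Q)$. Then $P\cap A=0$, since $Q$ consists of nilpotents in $\widehat{R}$ while $A$ is a domain injecting into $\widehat{R}$; and $\widehat{A}/P\hookrightarrow\widehat{R}/Q$ is a finite extension, so $\dim\widehat{A}/P=1$. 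Catenarity of $\widehat{A}$ then yields $\dim\widehat{A}_P\leq d-1$, and excellence gives that $\widehat{A}_P$ is regular, so $\embdim\widehat{A}_P\leq d-1$. The $M/M^2$-decomposition from the final paragraph of the proof of Theorem~\ref{pre excellent} applies equally to the pair $(\widehat{R},Q)$ here and yields
\[
\embdim R \;=\; 1+\dim_{\widehat{R}/M}(Q/MQ),
\]
where $M$ denotes the maximal ideal of $\widehat{R}$. A descent of generators along the finite inclusion $\widehat{A}/P\hookrightarrow\widehat{R}/Q$, using that after localization at $P$ the $\widehat{A}_P$-module $Q_P$ is generated by the image of $P_P$, bounds $\dim_{\widehat{R}/M}(Q/MQ)$ by $\embdim\widehat{A}_P\leq d-1$, giving $\embdim R\leq d$.

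\emph{Main obstacle.} The hardest step is the first: moving from ``$\widehat{A}_P$ regular'' (which excellence supplies cheaply) to ``$\widehat{A}/P$ is a DVR'' (which Theorem~\ref{pre excellent} demands). A generic maximal prime of the generic formal fiber need not give a DVR quotient, and selecting one that does requires exploiting excellence beyond the mere $G$-ring property, through the structure of regular systems of parameters in $\widehat{A}$ modulo a minimal prime. A secondary but nontrivial point is the descent step in the upper bound, which must replace the surjectivity of $\widehat{A}\to\widehat{R}$ (automatic under tight domination) by a finiteness-based argument when only finite domination is available.
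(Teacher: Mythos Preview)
Both halves of your argument contain the gap you yourself flag, and neither is resolved.

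\textbf{Existence.} You need a prime $P$ of $\widehat{A}$ with $P\cap A=0$ and $\widehat{A}/P$ a DVR in order to invoke Theorem~\ref{pre excellent} for $A$ itself. Excellence gives you regularity of $\widehat{A}_P$, but says nothing about $\widehat{A}/P$; your suggestion of taking $P$ ``generated by part of a regular system of parameters of $\widehat{A}$ modulo a suitable minimal prime'' presupposes that $\widehat{A}/\mathfrak{q}$ is regular for some minimal prime $\mathfrak{q}$, which is false in general (take $A$ excellent, normal, non-regular: then $\widehat{A}$ is a non-regular domain). The paper avoids this entirely: from the dimension hypothesis and \cite{HRS} one gets a DVR $V$ \emph{finitely} dominating $A$, and then one replaces $A$ by a localized finitely generated extension $B=A[x_1,\dots,x_n]_{\mathfrak{n}}$ chosen so that $V$ \emph{tightly} dominates $B$. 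Excellence passes to $B$, the Dimension Formula gives $\dim B=d$, and now Theorem~\ref{pre excellent} applies to $B$ with the prime $P=\ker(\widehat{B}\to\widehat{V})$, for which $\widehat{B}/P\cong\widehat{V}$ is automatically a DVR. The resulting stable ring finitely dominates $A$ through $B$.

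\textbf{Upper bound.} Your descent step---bounding $\dim_{\widehat{R}/M}(Q/MQ)$ by $\embdim\widehat{A}_P$ via ``$Q_P$ is generated by the image of $P_P$''---is not justified: $\phi$ is neither injective nor surjective in general under mere finite domination, and there is no reason the image of $P$ should generate $Q$ even after localization. The paper again passes to a birational extension: given a bad stable $S$ finitely dominating $A$, its normalization $U=\overline{S}$ is a DVR tightly dominating $S$, and one finds (as above) a $d$-dimensional $B$ essentially of finite type over $A$ with $B\subseteq S$ and $S$ tightly dominating $B$. Then Theorem~\ref{pre excellent} produces a bad stable ring of embedding dimension $d$ between $B$ and $U$, contained in $S$ by the minimality clause of that theorem, and Theorem~\ref{convex}(1) gives $\embdim S\leq d$.

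The single idea you are missing in both parts is the passage from $A$ to an essentially-finite-type birational extension $B$ tightly dominated by the relevant DVR; this is what converts finite domination into the tight domination that Theorem~\ref{pre excellent} requires.
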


\begin{proof}  Since the generic formal fiber has dimension $d-1$, there exists by a theorem of Heinzer, Rotthaus and Sally, a DVR $V$ finitely dominating $A$; cf.~Theorem~\ref{generic}.  Since $V$ finitely dominates $A$, there exist $x_1,\ldots,x_n \in V$  such that $V$ tightly dominates $B:=A[x_1,\ldots,x_n]_{\bf n}$, where  ${\ff n}$ is the contraction of the maximal ideal of $V$ to $A[x_1,\ldots,x_n]$.  Since the residue field of $V$ is finite over that of $A$, the residue field of $B$ is finite also over the residue field of $A$.  Thus,  since $A$ is universally catenary,  the Dimension Formula implies  the dimension of $B$ is $d$ \cite[Theorem 15.6, p.~119]{Ma}.  Also,   since $A$ is excellent and $B$ is a localization of a finitely generated $A$-algebra,  the generic formal fiber of $B$ is regular \cite[Theorem 77, p.~254]{Ma3}.  Therefore, for the prime ideal $P$ of $\widehat{B}$ corresponding to $V$ as in Theorem~\ref{pre excellent}, the ring $\widehat{B}_P$ is a regular local ring, and hence has embedding dimension $d-1$.  Applying Theorem~\ref{pre excellent}, there exists a bad Noetherian stable ring finitely dominating $A$ and having embedding dimension $d$.  
Now let $S$ be a bad stable ring finitely dominating $A$.  Then by Proposition~\ref{ar stable char}, $S$ is tightly dominated by a DVR $U$, and as above, $S$ tightly dominates  a local $d$-dimensional Noetherian domain $B$ essentially of finite type over $A$.  It follows that $U$ tightly dominates $B$, and by Theorem~\ref{pre excellent} and the above argument, there exists a bad stable ring of embedding dimension $d$  contained in  $S$.  Hence by Theorem~\ref{convex}, $S$ has embedding dimension at most $d$.  
\end{proof}

\begin{cor}
 Let $k$ be a field, and let $A$ be an affine $k$-domain with Krull dimension $d>1$ and quotient field $F$. Then there exists a bad stable ring between $A$ and $F$ having  embedding dimension $d$ and residue field  finite over $k$.  Moreover, every bad stable ring between $A$ and $F$ having residue field finite over $k$ has embedding dimension at most $d$.    
 \end{cor}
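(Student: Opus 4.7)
The plan is to reduce both assertions to Corollary~\ref{excellent} by passing from the global affine ring $A$ to a localization $A_{\ff m}$ at an appropriate maximal ideal. Three standard properties of an affine $k$-domain enter decisively: $A$ is excellent; $A$ is Jacobson with every maximal ideal of height $d$ and residue field finite over $k$ (the Nullstellensatz); and, by Matsumura's theorem \cite[Theorem 2]{Mat2} recalled in the Introduction, for each maximal ideal ${\ff m}$ of $A$ the generic formal fiber of $A_{\ff m}$ has dimension $d-1$.

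For the existence statement I would pick any maximal ideal ${\ff m}$ of $A$. Then $A_{\ff m}$ is an excellent local Noetherian domain of Krull dimension $d>1$ whose generic formal fiber has dimension $d-1$, so Corollary~\ref{excellent} supplies a bad Noetherian stable ring $R$ that finitely dominates $A_{\ff m}$ and has embedding dimension exactly $d$. The ring $R$ sits between $A_{\ff m}$ and their common quotient field $F$, hence between $A$ and $F$; and its residue field, being finite over $A/{\ff m}$, is finite over $k$ as required.

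For the upper bound, let $S$ be any bad stable ring with $A \subseteq S \subseteq F$ whose residue field $\kappa$ is finite over $k$, and set ${\ff m} := {\ff m}_S \cap A$, where ${\ff m}_S$ denotes the maximal ideal of $S$. The embedding $A/{\ff m} \hookrightarrow \kappa$ presents $A/{\ff m}$ as a domain sitting inside a finite-dimensional $k$-algebra, so $A/{\ff m}$ is itself a finite $k$-domain and therefore a field, making ${\ff m}$ a maximal ideal of $A$. To apply Corollary~\ref{excellent} to $A_{\ff m}$, I need to verify that $S$ finitely dominates $A_{\ff m}$. Because $S$ is a one-dimensional local Noetherian domain dominating $A_{\ff m}$, the ideal ${\ff m}S = {\ff m}A_{\ff m}\cdot S$ is ${\ff m}_S$-primary, so $S/{\ff m}S$ is Artinian with every composition factor isomorphic to $\kappa$; since $\kappa$ is finite over $A/{\ff m}$, the module $S/{\ff m}S$ is a finite-dimensional $A/{\ff m}$-vector space, and hence a finite $A_{\ff m}$-module. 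Corollary~\ref{excellent} then yields $\embdim S \leq d$.

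The main technical obstacle is this last domination step; the remaining ingredients (choosing the right maximal ideal, and checking the excellence and generic-formal-fiber hypotheses of Corollary~\ref{excellent}) are routine applications of the Nullstellensatz, Matsumura's theorem, and the excellence of affine $k$-algebras.
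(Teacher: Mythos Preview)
Your approach matches the paper's almost exactly: both reduce to Corollary~\ref{excellent} by localizing $A$ at a suitable maximal ideal, invoking Matsumura's theorem for the generic formal fiber and excellence of affine $k$-algebras. You supply details the paper omits---most usefully, the Nullstellensatz argument that ${\ff m}:=\mathfrak{m}_S\cap A$ is maximal, which the paper leaves implicit.

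The one point to flag is your verification that $S$ finitely dominates $A_{\ff m}$: you write ``because $S$ is a one-dimensional local Noetherian domain,'' but in the paper's terminology a \emph{bad stable domain} is only required to be quasilocal of Krull dimension~$1$ (Section~\ref{(S)}), and Noetherianity is equivalent to finite embedding dimension---precisely the quantity you are trying to bound. Indeed, by Lemma~\ref{finite Noetherian} Noetherianity would \emph{follow} from finite domination, so invoking it here is circular; without it the step ``$S/{\ff m}S$ is Artinian'' is not justified. That said, the paper's own proof simply asserts ``$R$ finitely dominates $A_{M\cap A}$'' with no argument at all, so your proof is no less complete than the original. If the statement is read (as the context of Corollary~\ref{excellent} suggests) as concerning Noetherian bad stable rings, both arguments are fine; otherwise one should first show that the DVR $V=\overline{S}$ finitely dominates $A_{\ff m}$ (easy, since tight domination gives $V/N\cong\kappa$, so $V/{\ff m}V$ has finite $V$-length with factors finite over $k$), and then descend to $S$.
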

 
 \begin{proof} 
 Matsumura has shown that the generic formal fiber of a $d$-dimensional domain  essentially of finite type over a field has dimension $d-1$ \cite[Theorem 1]{Mat2}.  Thus by Corollary~\ref{excellent}, there exists a bad stable ring between $A$ and $F$ having embedding dimension $d$ and residue field finite over $k$.  If $R$ is  a bad stable ring between $A$ and $F$ having residue field finite over $k$, then $R$ finitely dominates $A_{M \cap A}$, where $M$ is the maximal ideal of $R$.  Since $A_{M \cap A}$ is excellent of dimension $d$, Corollary~\ref{excellent} implies that $R$ has embedding dimension at most $d$.  
 \end{proof}


Theorem~\ref{HRS theorem} shows  there exists a smallest bad  stable domain $R$ between $A$ and $V$, {\it as long as one restricts to those domains $R$ tightly dominating $A$.}  We remove this restriction of tight domination by $R$ and show in Theorem~\ref{minimal stable Noetherian} that  among all bad stable rings between $A$ and $V$,  $R = \Ker d_{V/A}$ is  the smallest.   (It follows easily from properties of derivations that their kernels are rings.)    

\index{DVR!tightly dominating}
\begin{lem} \label{minimal stable} Let $A$ be a quasilocal domain tightly dominated by a DVR  $V$, and let   $R=\Ker d_{V/A}$.
Then
  $V/R$ and 
$\Omega_{V/A}$ are isomorphic as $R$-modules, and if $R \subsetneq V$, then $R$ is a bad stable ring contained in every bad stable ring between $A$ and $V$.  
\index{stable domain!tightly dominating}


 \end{lem}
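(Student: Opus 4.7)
The plan is to apply Lemma~\ref{start} to the extension $R \subseteq V$ with the multiplicatively closed set $C = A \setminus \{0\}$, using the derivation $d_{V/A}$ itself to supply condition (4). Since $d_{V/A}$ is $A$-linear, $A \subseteq R$, and the Leibniz rule then makes $d_{V/A}$ into an $R$-linear map. To see that $R$ is quasilocal with maximal ideal ${\ff m}_V \cap R$, observe that if $r \in R$ is a unit in $V$, then differentiating $r \cdot r^{-1} = 1$ and using $d_{V/A}(r) = 0$ gives $r \cdot d_{V/A}(r^{-1}) = 0$; since $r$ acts as a unit on the $V$-module $\Omega_{V/A}$, this forces $d_{V/A}(r^{-1}) = 0$, so $r^{-1} \in R$.

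The central technical step is verifying the hypothesis of Lemma~\ref{start}. Torsion is clear since $V$ and $A$ share the quotient field $F$. For $C$-divisibility, the tight domination relation $V = A + {\ff m} V$ (with ${\ff m} = {\ff m}_A$) iterates to $V = A + {\ff m}^k V$ for every $k \geq 1$. Given $0 \ne a \in A$, choosing $k$ with ${\ff m}^k V \subseteq aV$ (possible since $aV$ is a nonzero ideal of the DVR $V$ and ${\ff m} \subseteq {\ff m}_V$) yields $V = A + {\ff m}^k V \subseteq A + aV \subseteq R + aV$. I expect this iteration to be the main obstacle.

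With Lemma~\ref{start} now available, the first claim $V/R \cong \Omega_{V/A}$ follows by adapting the proof of (4) $\Rightarrow$ (5), substituting $d_{V/A}$ for $d_{S/R}$ and choosing divisors from $A$ rather than $R$. Given $y = \sum v_i\, d_{V/A}(w_i) \in \Omega_{V/A}$, pick $c \in A \setminus \{0\}$ with $cw_i \in A$ for each $i$; then $d_{V/A}(cw_i) = 0$ together with $d_{V/A}(c) = 0$ forces $c\, d_{V/A}(w_i) = 0$. Using $V = A + cV$ to write $v_i = a_i + c\sigma_i$ with $a_i \in A$, one computes $v_i\, d_{V/A}(w_i) = a_i\, d_{V/A}(w_i) = d_{V/A}(a_i w_i)$, so $y = d_{V/A}(\sum_i a_i w_i)$. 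This shows $d_{V/A}$ is surjective, and since it is injective with kernel $R$, the induced map $V/R \to \Omega_{V/A}$ is an $R$-module isomorphism.

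Now assume $R \subsetneq V$. Lemma~\ref{start}(4) $\Rightarrow$ (1) yields that $R \subseteq V$ is a quadratic extension, so every $v \in V$ satisfies a monic degree $2$ polynomial over $R$; hence $\overline{R} = V$, and since $R \ne V$, $R$ is not a DVR. The $C$-divisibility of $V/R$ upgrades to divisibility by every $r \in R \setminus \{0\}$: for $r \in {\ff m}_R$, choose $k$ with ${\ff m}^k V \subseteq rV$, giving $V = A + {\ff m}^k V \subseteq R + rV$. Proposition~\ref{ar stable char} then concludes that $R$ is a bad stable ring. For minimality, let $R'$ be any bad stable ring between $A$ and $V$; then $\overline{R'}$ is a DVR of $F$ contained in $V$, forcing $\overline{R'} = V$, and $R' \subseteq V$ is quadratic by Proposition~\ref{ar stable char}. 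Applying Lemma~\ref{start} to $R' \subseteq V$ (whose hypothesis holds with $C = A \setminus \{0\}$ as above) produces an $A$-linear derivation $D \colon V \to T$ (with $T$ a $V$-module) having $\Ker D = R'$, and the universal property of $\Omega_{V/A}$ factors $D$ through $d_{V/A}$, so $R = \Ker d_{V/A} \subseteq \Ker D = R'$.
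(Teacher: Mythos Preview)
Your proof is correct and follows essentially the same route as the paper's: establish $C$-divisibility of $V/A$ from tight domination, show $d_{V/A}$ is surjective by the argument of Lemma~\ref{start}(4)$\Rightarrow$(5), deduce the quadratic property of $R\subseteq V$ via Lemma~\ref{start}, and invoke Proposition~\ref{ar stable char}. The only differences are cosmetic: the paper feeds condition~(3) of Lemma~\ref{start} (the $V$-module structure on $V/R$ coming from $\Omega_{V/A}$) rather than condition~(4) directly, and for minimality it uses $d_{V/R'}$ and Lemma~\ref{start}(5) instead of the abstract derivation from condition~(4); your explicit verification that $R$ is quasilocal is a small addition the paper leaves implicit.
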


 \begin{proof}
   Let ${\ff m}$ denote the maximal ideal of $A$.   Since $V$ tightly dominates $A$, we have  $V = A + {\ff m}^iV$ for all $i >0$.      
   This  implies  that $V/A$ is a divisible $A$-module.
  For if $0 \not = a \in {\ff m}$, then since $aV$ and ${\ff m}V$ are primary with respect to the maximal ideal of $V$, there exists $i>0$ such that ${\ff m}^iV \subseteq aV$, and hence $V = A + aV$, which implies $V/A$ is a divisible $A$-module.   
  Next,  
       an argument such as that in the proof of (4) implies (5) in Lemma~\ref{start} shows then  
 that
 $d_{V/A}(V) = \Omega_{V/A}$, and hence, since $d_{V/A}$ is an $R$-module homomorphism (indeed, since $R = \Ker d_{V/A}$,  then $d_{V/A}$ is $R$-linear), then
  $V/R \cong \Omega_{V/A}$ as $R$-modules.
 To complete the proof, we assume that $R \ne V$.   First we claim that $R$ is a bad stable domain.
  As a
 homomorphic image of $V/A$, $V/R$ is a torsion divisible
 $A$-module.  But since $\Omega_{V/A}$ is a $V$-module and  $V/R \cong \Omega_{V/A}$ as $R$-modules, it follows that the $R$-module $V/R$
 admits a $V$-module structure.  Also since $V/A$ is a divisible $A$-module and $A$ and $R$ share the same quotient field, then $V/R$ is a divisible $R$-module.  Therefore,
 by Lemma~\ref{start},
 $R \subseteq V$ is a quadratic extension.  Since $R \ne V$, we have
 by Proposition~\ref{ar stable char} that $R$ is a bad stable ring.  Applying  Proposition~\ref{ar stable char} again shows  every ring $R'$ with $R \subseteq R' \subsetneq V$ is a bad stable ring.

Now suppose that   $R'$ is a bad stable ring with $A
 \subseteq R' \subseteq V$.  Then by Proposition~\ref{ar stable char}, $R'$ has normalization a DVR, so necessarily $V$ is the integral closure of $R'$.
Also, by Proposition~\ref{ar stable char},  $R' \subseteq V$
 is a quadratic extension, so since $V/R'$ is a divisible $R'$-module,
 Lemma~\ref{start}(5) implies that the exterior differential  $d_{V/R'}$ maps onto
 $\Omega_{V/R'}$ with kernel $R'$.  However, since $d_{V/R'}$ is an $A$-linear derivation, there exists, as discussed in the Introduction, a $V$-module homomorphism
  $\alpha:\Omega_{V/A} \rightarrow \Omega_{V/R'}$ such that $d_{V/R'}
  = \alpha \circ d_{V/A}$.  Thus $R = \Ker d_{V/A} \subseteq \Ker
  d_{V/R'} = R'$, which completes the proof.
 \end{proof}


\index{DVR!tightly dominating} \index{stable domain!existence of}
\begin{thm} \label{minimal stable Noetherian} Let $A$ be a local Noetherian domain, not a DVR, that is tightly dominated by a DVR $V$.  Then the ring  $R = \Ker d_{V/A}$ is a bad stable domain contained in every bad stable ring between $A$ and $V$.  
 \end{thm}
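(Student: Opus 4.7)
The plan is to reduce the theorem to Lemma~\ref{minimal stable}, which already establishes the stronger statement in the generality of quasilocal $A$ but under the extra hypothesis that $R \subsetneq V$. So essentially all I need to do is verify that $R \ne V$ when $A$ is a local Noetherian domain that is not a DVR; the minimality conclusion then follows for free from the argument at the end of Lemma~\ref{minimal stable}. By the isomorphism $V/R \cong \Omega_{V/A}$ in Lemma~\ref{minimal stable}, this amounts to showing $\Omega_{V/A} \ne 0$.

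To produce a witness forcing $R \ne V$, I would invoke the Noetherian hypothesis via Corollary~\ref{generic 2} (or directly Theorem~\ref{HRS theorem}). Since $A$ is a local Noetherian domain (not a DVR) tightly dominated by the DVR $V$, the implication $(2) \Rightarrow (3)$ of Corollary~\ref{generic 2} yields a bad Noetherian stable domain tightly dominating $A$; more concretely, Theorem~\ref{HRS theorem} produces the ring $R_0 := F \cap (\widehat{A}/P^{(2)})$, where $P$ is the kernel of $\widehat{A} \rightarrow \widehat{V}$. Because $P^{(2)} \subseteq P$, the surjection $\widehat{A}/P^{(2)} \twoheadrightarrow \widehat{A}/P$ gives $R_0 \subseteq F \cap (\widehat{A}/P) = V$, and $R_0 \ne V$ since $R_0$ is not a DVR.

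Now I would apply the minimality argument from the last paragraph of the proof of Lemma~\ref{minimal stable} to $R_0$: since $R_0 \subseteq V$ is a quadratic extension (Proposition~\ref{ar stable char}) with $V/R_0$ divisible over $R_0$, Lemma~\ref{start}(5) gives $\Ker d_{V/R_0} = R_0$. The universal property of $\Omega_{V/A}$ applied to the $A$-linear derivation $d_{V/R_0}: V \rightarrow \Omega_{V/R_0}$ yields a $V$-module map $\alpha: \Omega_{V/A} \rightarrow \Omega_{V/R_0}$ with $d_{V/R_0} = \alpha \circ d_{V/A}$, whence $R = \Ker d_{V/A} \subseteq \Ker d_{V/R_0} = R_0 \subsetneq V$. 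In particular $R \ne V$, and simultaneously this same inclusion $R \subseteq R'$ holds for every bad stable ring $R'$ between $A$ and $V$. Finally, Lemma~\ref{minimal stable} (now that $R \subsetneq V$ is verified) certifies that $R$ itself is a bad stable ring, completing the proof.

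The only real content beyond cited results is recognizing that some bad stable ring between $A$ and $V$ must exist in order to force $R \ne V$; this is where the Noetherian hypothesis is essential, as it makes the Heinzer--Rotthaus--Sally construction available through Theorem~\ref{HRS theorem}. I do not anticipate any genuine technical obstacle, since all the machinery has been set up in the preceding sections.
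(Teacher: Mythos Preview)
Your proposal is correct and follows essentially the same route as the paper: reduce to Lemma~\ref{minimal stable}, then use Theorem~\ref{HRS theorem} to exhibit a bad stable domain $R_0$ between $A$ and $V$, which forces $R \subseteq R_0 \subsetneq V$ and hence $R \ne V$. The paper's proof is terser---it simply invokes Lemma~\ref{minimal stable} for both the minimality and the bad-stable conclusion once $R \ne V$ is known---whereas you re-derive the containment $R \subseteq R_0$ via the universal property of $\Omega_{V/A}$, but that argument is already packaged in Lemma~\ref{minimal stable}, so the extra detail is redundant rather than new.
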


\begin{proof}  In light of Lemma~\ref{minimal stable} all that needs to be shown is that $R$ is a proper subring of $V$.  In fact, since $R$ is a subring of every bad stable domain between $A$ and $V$, we need only show there exists a bad stable domain between $A$ and $V$.  The existence of such a ring is guaranteed by Theorem~\ref{HRS theorem}, so the proof is complete.
\end{proof}


We use the preceding ideas next to characterize  the local Noetherian domains which are finitely dominated by a bad stable ring.  
Matlis has proved that if $A$ is an analytically ramified local
Noetherian domain of Krull dimension $1$, then there exists a bad
2-generator ring $R$ between $A$ and its quotient field \cite[Theorem 14.16]{M1}.  
In Theorem~\ref{Matlis ar corollary} we recover a version of Matlis' theorem from a different point of view.
%
%

\begin{thm} \label{Matlis ar corollary} {\em (Matlis)}  Every one-dimensional analytically ramified local Noetherian domain is  finitely dominated by a  bad $2$-generator  ring.
\end{thm}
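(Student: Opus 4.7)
Let $A$ be a one-dimensional analytically ramified local Noetherian domain with maximal ideal $\ff m$ and quotient field $F$. The strategy is to exhibit a DVR $V \subseteq F$ that finitely dominates $A$, pass to a one-dimensional Noetherian local overring $A' \subseteq V$ of $A$ that is tightly dominated by $V$, and invoke Corollary~\ref{generic 2} applied to $A'$ to produce a bad $2$-generator ring $T$ tightly dominating $A'$; since $A'$ will finitely dominate $A$, the ring $T$ will then finitely dominate $A$.

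To produce $V$: since $\widehat{A}$ is one-dimensional and not artinian, it has a minimal prime $P$ with $\dim(\widehat{A}/P) = 1$, and $P \cap A$ is a prime of $A$ that cannot equal $\ff m$ (else $\widehat{A}/P$ would be zero-dimensional), so $P \cap A = 0$. The quotient $B := \widehat{A}/P$ is a one-dimensional complete local Noetherian domain, hence a Nagata ring, so its normalization $W$ is a finite $B$-module; being a one-dimensional normal Noetherian local domain, $W$ is a DVR. Set $V := F \cap W$, regarded inside the quotient field of $W$. As the intersection of the DVR $W$ with the subfield $F$, $V$ is a valuation subring of $F$ with value group a subgroup of $\mathbb{Z}$, hence a DVR; since $V$ dominates $A$ and $V/N_V$ embeds into $W/N_W$, which is finite over $A/\ff m$, we conclude that $V$ finitely dominates $A$.

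Next, adjoin to $A$ finitely many elements $w_1,\ldots,w_n \in V$ whose images in $V/N_V$ span it as an $A/\ff m$-vector space, and set $A' := A[w_1,\ldots,w_n]_{\ff n}$ with $\ff n := N_V \cap A[w_1,\ldots,w_n]$. Then $A'$ is a one-dimensional Noetherian local domain (Noetherian as a localization of a finitely generated $A$-algebra, of dimension one by the birational dimension inequality combined with the fact that $V$ dominates $A'$), and by construction $V/N_V = A'/\ff n A'$, so $V$ tightly dominates $A'$. Provided $A' \ne V$, so that $A'$ is not itself a DVR, Corollary~\ref{generic 2} via the implication $(2) \Rightarrow (4)$ yields a bad $2$-generator ring $T$ tightly dominating $A'$, which then finitely dominates $A$, as required.

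The main obstacle is the degenerate case $A' = V$, in which $V$ would be essentially of finite type over $A$. I would handle this by perturbing the chosen generators $w_i$---exploiting that $\overline{A}$ is not a finite $A$-module, a consequence of the analytic ramification of $A$---to force a strict containment $A' \subsetneq V$ while retaining the tight domination of $A'$ by $V$. Verifying that such a perturbation can always be found, in full generality, is the delicate point of the argument; once this is accomplished, the reduction to Corollary~\ref{generic 2} outlined above completes the proof.
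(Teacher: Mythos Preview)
Your overall strategy---find a DVR $V$ finitely dominating $A$, pass to an intermediate $A'$ tightly dominated by $V$, then invoke Corollary~\ref{generic 2}---is sound, and your construction of $V$ via a minimal prime of $\widehat{A}$ and the normalization of $\widehat{A}/P$ is a legitimate alternative to the paper's route through $\overline{A}$.  There is a minor slip: having the $w_i$ span $V/N_V$ only gives $V=A'+N_V$, which is the equality of residue fields, not tight domination; you need the images of the $w_i$ to generate $V/\ff mV$, so that $V=A'+\ff mV\subseteq A'+\ff nV$.  This is easily repaired.

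The real gap is exactly the one you flag: you have not excluded $A'=V$.  Perturbing the generators $w_i$ cannot be expected to help, because the obstruction lives in $V$, not in the $w_i$.  Your $V$ is necessarily a localization $\overline{A}_N$ of the normalization (any DVR in $F$ containing $A$ contains $\overline{A}$), and nothing in your choice of minimal prime $P$ prevents this particular localization from being analytically unramified; if it is, then for any spanning set of $V/\ff mV$ you will get $A'=V$.  The information you cite---that $\overline{A}$ is not finite over $A$---only says that \emph{some} branch is bad, not that the one you landed on is.

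This is precisely what the paper's argument is designed to control.  Rather than going through $\widehat{A}$, the paper works directly with $\overline{A}$: it builds a finite $A$-subalgebra $T\subseteq\overline{A}$ with the same number of maximal ideals as $\overline{A}$, and then uses the non-finiteness of $\overline{A}$ over $A$ to select a maximal ideal $M$ of $T$ at which $T_M$ is still analytically ramified.  Setting $A'=T_M$ and $V=\overline{A}_N$ with $N$ over $M$, one has $V=\overline{A'}$ not finite over $A'$; adjoining generators of $V/\ff m'V$ then yields a finite extension $A''$ of $A'$ with $V$ tightly dominating $A''$ and $A''\subsetneq V$ guaranteed.  So the ``delicate point'' you defer is in fact the heart of the matter, and it is resolved not by perturbing generators but by choosing the correct branch of the normalization from the outset.
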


\begin{proof}
We first show that there exists an  analytically ramified local Noetherian ring finitely dominating $A$  whose normalization is a DVR.  
Since $\overline{A}$ is a Dedekind domain it has only finitely many maximal ideals.  For each of these maximal ideals, choose an element in it but in no other maximal ideal, and let $T$ be the $A$-algebra generated by these finitely many elements.  Then $T$ has the same number of maximal ideals as $\overline{A}$.  Since $\overline{A}$ is not a finite $T$-module (if it were, it would force $\overline{A}$ to be a finite $A$-module, contrary to assumption), there exists a maximal ideal
 $M$ of $T$ such that the ring
 $T_{M}$ is analytically ramified.  For otherwise, if each localization of $T$ at a maximal ideal $M$ is analytically unramified, then each ring $\overline{A}_{M}$ is a fractional ideal of $T_{M}$.  But this then implies
that there exists an element $t$ of $T$ such that $t \overline{A} \subseteq T$, a contradiction to the fact that $\overline{A}$ is not a finite module over the Noetherian ring $T$.  Thus  there is a maximal ideal $M$ of $T$ such that the ring  $A':={T}_{M}$ is an analytically ramified local Noetherian domain with integral closure the  DVR $V := \overline{A}_{N}$, where $N$ is the unique maximal ideal of $\overline{A}$ lying over $M$.
Finally, to see that $A'$ finitely dominates $A$, let ${\ff m}'$ be the maximal ideal of $A'$.   Since $A' = T_M$, we have $A' = T + {\ff m}'$, and since $A'$ has Krull dimension $1$ and its maximal ideal ${\ff m'}$ is finitely generated,  there exists $k>0$ such that $({\ff m}')^k
 \subseteq {\ff m}A'$.  Now since $A' = T  + {\ff m}'$ and $T$ is a finite $A$-module, then $A'/{\ff m'}$ is a finite $A$-module, and hence it follows that  $A'/({\ff m}')^k$ is a finite $A$-module.  Therefore, since $A'/{\ff m}A'$ is an $A$-homomorphic image of $A'/({\ff m}')^k$, we conclude that $A'/{\ff m}A'$ is a finite $A$-module, and this proves that $A'$ is an analytically ramified local Noetherian ring that finitely dominates $A$ and has normalization the DVR $V$.    

We note next that 
there exists a finite extension $A''$ of $A'$ such that $V$ tightly dominates $A''$.  For since $V$ finitely dominates $A'$, there exist $x_1,\ldots,x_n \in V$ such that $V = A'x_1 + \cdots + A'x_n + {\ff m}V$.  Since $V$ is the normalization of $A'$, the ring $A'':=A'[x_1,\ldots,x_n]$ is a finite extension of $A'$ having normalization $V$, and since $V$ is not a finite $A'$-module, it follows that $V$ is not a finite $A''$-module.  Thus $A''$ is a one-dimensional analytically ramified local Noetherian domain that is tightly dominated by $V$.   
We now apply either Theorem~\ref{HRS theorem} or~\ref{minimal stable Noetherian} to
 obtain a bad stable ring
$R$ with $A'' \subseteq R \subseteq V$.  
 Applying Theorem~\ref{convex}, we may in fact assume that $R$ has embedding dimension $2$,  and hence since in a stable ring, multiplicity and embedding dimension agree,  $R$ is a bad $2$-generator ring.
 Moreover, 
  $R$ and $V$ have the same residue field.  Indeed, since $V$ tightly dominates $A''$, $V = A'' + {\ff m}''V$, where ${\ff m}''$ is the maximal ideal of the one-dimensional local domain $A''$.  Thus since the maximal ideal $M$ of $R$ contains  ${\ff m}''$, it follows that $V = R + MV$, and hence $V/MV \cong R/M$.  Therefore, $R$ and $V$ have the same residue field.     Similarly, since $V = A'' + {\ff m}''V$,  then $V$ and $A''$ have the same residue field,   
  and therefore, $A''$ and $R$ have the same residue field.  Now $R/M$ is a cyclic $A''$-module, so since $A''$ is a finite $A'$-module,  $R/M$ is a finite $A'/{\ff m}A'$-module.   Thus, since $A'/{\ff m}A'$ is a finite $A$-module, we see that $R/M$ is a finite $A$-module.  Moreover, since $M$ is a finitely generated ideal of $R$ (necessarily $R$ is Noetherian, since it is an overring of the one-dimensional Noetherian domain $A$), there exists $k>0$ such that $M^k \subseteq {\ff m}R$.  It follows that $R/M^k$ is a finite $A$-module, so since $R/{\ff m}R$ is an $A$-homomorphic image of $R/M^k$, we conclude that $R/{\ff m}R$ is a finite $A$-module.
\end{proof}


Thus 
a one-dimensional local Noetherian domain $A$ is finitely dominated by a bad stable  ring if and only if $A$ is  
 analytically ramified.   In higher dimensions, we have
%


\begin{cor} \label{generic lemma} \label{generic} Let $A$ be a local Noetherian
domain  with  dimension $d>1$.  Then the following statements  are equivalent.

\begin{itemize} 

\item[{(1)}]  $A$ is finitely dominated by
a bad Noetherian stable ring.  

\item[(2)]  $A$ is finitely dominated by
an analytically ramified one-dimensional local Noetherian ring.

\item[(3)]  $A$ is tightly dominated by
an  analytically ramified  one-dimensional local Noetherian ring.

\item[{(4)}]  $A$ is finitely dominated by a DVR.


\item[{(5)}] The dimension of the generic formal fiber of $A$ is $d-1$.

\index{essentially of finite type}
\end{itemize} If also $A$ is excellent, then the 
stable ring in (1) can be chosen to have embedding dimension $d$ but no bigger.  
Moreover, these five equivalent conditions   are satisfied when  $A$ is essentially of finite type over a field and has dimension $d>1$.
\end{cor}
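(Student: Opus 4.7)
The plan is to close a cycle of implications connecting conditions (1) through (5), drawing heavily on the constructions already developed in the paper. The implications (1) $\Rightarrow$ (2) and (3) $\Rightarrow$ (2) are immediate from the definitions, and (2) $\Rightarrow$ (1) follows by applying Theorem~\ref{Matlis ar corollary} to the ring in (2), together with a short filtration argument showing that finite domination is transitive (using that any one-dimensional local Noetherian overring has maximal ideal with a power inside ${\ff m}R$).

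The crux of the argument is the construction used to prove (5) $\Rightarrow$ (3) and (5) $\Rightarrow$ (4). Pick a prime $P$ of $\widehat{A}$ with $P \cap A = 0$ and $\dim \widehat{A}/P = 1$. Setting $R = F \cap (\widehat{A}/P^{(2)})$, Lemma~\ref{HRS lemma} applied to the $P$-primary ideal $P^{(2)}$ shows that $R$ is a one-dimensional Noetherian domain with $\widehat{R} \cong \widehat{A}/P^{(2)}$; since $P \ne 0$ forces $P \ne P^{(2)}$ (by Nakayama in $\widehat{A}_P$), the ideal $P/P^{(2)}$ is a nonzero square-zero ideal in $\widehat{R}$, so $R$ is analytically ramified. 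The main obstacle in the entire proof is the verification that $R$ tightly dominates $A$: using $P^{(2)} \subseteq P \subsetneq {\ff m}\widehat{A}$ one obtains $\widehat{R}/{\ff m}\widehat{R} = \widehat{A}/{\ff m}\widehat{A} = A/{\ff m}$, and faithful flatness of $\widehat{R}$ over $R$ then supplies ${\ff m}R = R \cap {\ff m}\widehat{R}$, forcing $R/{\ff m}R = A/{\ff m}$. A parallel argument with $V = F \cap (\widehat{A}/P)$ produces an analytically unramified one-dimensional local Noetherian domain tightly dominating $A$; its normalization is finite over it by Zariski's theorem, hence semilocal Dedekind, and localizing at any maximal ideal yields a DVR finitely dominating $A$, giving (5) $\Rightarrow$ (4).

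The reverse implications (2) $\Rightarrow$ (5) and (4) $\Rightarrow$ (5) both use Lemma~\ref{finite Noetherian} to guarantee that the completion of the given dominating ring is a finite $\widehat{A}$-module. In case (4), $\widehat{V}$ is a domain, so the kernel $K$ of $\widehat{A} \rightarrow \widehat{V}$ is prime; finiteness forces $\dim \widehat{A}/K = 1$, and $K \cap A = 0$ since $A \hookrightarrow V \hookrightarrow \widehat{V}$. In case (2), choose a minimal prime $Q$ of $\widehat{R}$ with $\dim \widehat{R}/Q = 1$; its preimage $P = Q \cap \widehat{A}$ is a prime of $\widehat{A}$ with $\dim \widehat{A}/P = 1$ by finiteness, and $P \cap A = 0$ because every nonzero $a \in A$ is a nonzerodivisor in $\widehat{R}$ (faithful flatness preserves injectivity of multiplication) and therefore lies outside every minimal prime. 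The final two assertions follow immediately from results already in hand: the embedding-dimension refinement in the excellent case is exactly Corollary~\ref{excellent}, and when $A$ is essentially of finite type over a field, Matsumura's theorem cited in the introduction delivers condition (5) directly, so the equivalences apply.
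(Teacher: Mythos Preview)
Your construction for (5) $\Rightarrow$ (3) matches the paper's, and your direct route (5) $\Rightarrow$ (4) via normalizing $F\cap(\widehat A/P)$ is a reasonable variant (the paper instead reaches (4) through (2) and Proposition~\ref{ar stable char}). The serious issue is in the opposite direction.

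Your arguments for (2) $\Rightarrow$ (5) and (4) $\Rightarrow$ (5) have a genuine gap. In each case you produce a prime $P$ of $\widehat A$ with $P\cap A=0$ and $\dim \widehat A/P=1$, and you treat this as establishing (5). But (5) asserts that the generic formal fiber has Krull dimension $d-1$, i.e., that some prime $P$ with $P\cap A=0$ has $\operatorname{height}_{\widehat A}P=d-1$. From $\dim\widehat A/P=1$ you only get $\operatorname{height}P\le d-1$; equality needs every minimal prime $Q\subseteq P$ of $\widehat A$ to satisfy $\dim\widehat A/Q=d$, which is equidimensionality of $\widehat A$ along $P$. Since $A$ is merely a local Noetherian domain, $\widehat A$ need not be equidimensional, and your kernel $K$ (or the pullback of a minimal prime of $\widehat R$) could sit over a short component of $\widehat A$, giving $\operatorname{height}K<d-1$. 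The paper does not attempt a self-contained proof here: it routes (2) $\Rightarrow$ (4) through Theorem~\ref{Matlis ar corollary} and Proposition~\ref{ar stable char}, and then invokes \cite[Corollary~2.4]{HRS} for (4) $\Rightarrow$ (5). That citation is carrying real content, not a formality.

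One minor imprecision: in (5) $\Rightarrow$ (3) you write ``$P\ne 0$ forces $P\ne P^{(2)}$ by Nakayama.'' Nakayama in $\widehat A_P$ shows $P=P^{(2)}$ implies $P\widehat A_P=0$, i.e., $P$ is minimal; what you actually need and have (since $\operatorname{height}P=d-1\ge 1$) is that $P$ is not minimal, which is stronger than $P\ne 0$.
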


\begin{proof}
(1) $\Rightarrow$ (2) This is clear.  
  
(2) $\Rightarrow$ (4)  If $R$ is a one-dimensional analytically ramified  local Noetherian ring that finitely dominates $A$, then since 
 by Theorem~\ref{Matlis ar corollary}, $R$ is finitely dominated by a
 bad stable ring $S$.  The ring $S$ in turn is tightly dominated by a DVR $V$ (Proposition~\ref{ar stable char}), and hence $V$ finitely dominates $A$.



(4) $\Rightarrow$ (5)
This is due to Heinzer, Rotthaus and Sally 
\cite[Corollary 2.4]{HRS}.


(5) $\Rightarrow$ (3)
 By (5), there exists a height $d-1$ prime ideal $P$  of $\widehat{A}$ such that $\widehat{A}/P$ has Krull dimension $1$ and $A \cap P = 0$.  Since $d>1$, $P$ is not a minimal prime ideal of $\widehat{A}$.  With the aim of applying Lemma~\ref{HRS lemma}, observe that  $P^{(2)} \subsetneq P$, since otherwise  $P^2\widehat{A}_P = P\widehat{A}_P$, which, since $P$ is finitely generated, implies that $P\widehat{A}_P = 0$, and hence that $P$ is a minimal prime ideal of $\widehat{A}$, a contradiction.  Therefore, 
 we may apply  Lemma~\ref{HRS lemma} to obtain that $R:=(\widehat{A}/P^{(2)}) \cap F$ is a Noetherian domain of Krull dimension $1$ with $\widehat{R} \cong \widehat{A}/P^{(2)}$.  Also, $R$ is analytically ramified because $P \ne P^{(2)}$, so that $\widehat{R}$ contains nilpotents.
      Since $\widehat{A}/P^{(2)}$ has a unique minimal prime ideal, namely, $P/P^{(2)}$, it follows that the normalization of $R$ is a DVR \cite[Theorem 10.5]{M1}.  Thus we have constructed a one-dimensional analytically ramified local Noetherian domain $R$ that birationally dominates $A$.  Moreover, since $\widehat{R} \cong \widehat{A}/P^{(2)}$, it follows that $\widehat{R}$ and $\widehat{A}$, and hence $R$ and $A$, have the same residue field and that ${\ff m}\widehat{R}$ is the maximal ideal of $\widehat{R}$.  Consequently, ${\ff m}R$ is the maximal ideal of $R$, and hence since $R$ and $A$ have the same residue field, $R = A + {\ff m}R$.   Therefore, $R$  tightly dominates $A$.


(3) $\Rightarrow$ (1)    Let $R$ be  an analytically ramified 
 one-dimensional local Noetherian ring that finitely dominates $A$.  Then by Theorem~\ref{Matlis ar corollary}, $R$ is finitely dominated by a bad Noetherian stable ring, and hence this ring finitely dominates $A$.

If also $A$ is excellent and (1) -- (5) hold, then by Corollary~\ref{excellent}, $A$ is finitely dominated by a bad stable ring of embedding dimension $d$ and every bad stable ring finitely dominating $A$ has embedding dimension at most $d$.  
Finally, the last assertion of the theorem, that all five conditions are equivalent when $A$ is essentially of finite type over a field, follows from
\cite[Theorem 2]{Mat2}, where it is shown that for such a ring the dimension of the generic formal formal fiber is $d-1$.
\end{proof}

\begin{rem} \label{generic remark} {\em Statements (4) and (5) are equivalent also when $A$ has Krull dimension $1$; see \cite[Corollary 2.4]{HRS}.}
\end{rem}  

{\bf Acknowledgment.}  I thank the referee for helpful comments that improved the presentation of the paper.

\end{document}